%version FB 5 juin 09 
\documentclass[11pt]{article}
\usepackage{amsmath,amsfonts,amssymb,latexsym,amsthm,dsfont}
\usepackage{geometry,graphicx}
\usepackage{hyperref}

\geometry{a4paper,portrait,left=2cm,right=2cm,top=2cm,foot=2cm}

\title{Trend to equilibrium and particle approximation for a weakly
 selfconsistent %kinetic
 Vlasov-Fokker-Planck equation}

\author{%
 Fran\c cois~\textsc{Bolley}, %
 Arnaud~\textsc{Guillin}, %
 Florent~\textsc{Malrieu} } 

\date{Preprint -- \today}

%%% Local macros
%bidouille indicatrice

%

% Theorems and so on
\newtheorem{thm}{Theorem}%[section]%
\newtheorem{prop}[thm]{Proposition}%
\newtheorem{lem}[thm]{Lemma}%
\newtheorem{lemma}[thm]{Lemma}%

\newtheorem{rem}[thm]{Remark}%
%
%

% Double capital letters

\newcommand{\dE}{\mathbb{E}}

\newcommand{\dP}{\mathbb{P}}
\newcommand{\dQ}{\mathbb{Q}}\newcommand{\dR}{\mathbb{R}}

\newcommand{\rr}{\mathbb{R}}
\newcommand{\ee}{\mathbb{E}}

% Roman capital letters

% Calligraphic capital letters
\newcommand{\cA}{\mathcal{A}}\newcommand{\cB}{\mathcal{B}}

% Bold capital letters

% Norms and so on
 
 % entier part
 % fractional part
\newcommand{\ABS}[1]{{{\left| #1 \right|}}} % |1|
 % {1}
 % <1>
 % ||1||
\newcommand{\PAR}[1]{{{\left(#1\right)}}} % (1)
 % (1)
 % (1)
 % derivee partielle
 %partial derivative
 % [1]
 % tot. variation
 % Lipschitz norm
\renewcommand{\leq}{\leqslant}
\renewcommand{\geq}{\geqslant}
\newcommand{\ds}{\displaystyle}

%%% End of local macros

\begin{document}

\maketitle

\begin{abstract}
 We consider a Vlasov-Fokker-Planck equation governing the evolution
 of the density of interacting and diffusive matter in the space of
 positions and velocities.
 % in a case when the interaction potential is small and the drift
 % forces are quadratic like.
 We use a probabilistic interpretation to obtain convergence towards
 equilibrium in Wasserstein distance with an explicit exponential
 rate. We also prove a propagation of chaos property for an
 associated particle system, and give rates on the approximation of
 the solution by the particle system.  Finally, a transportation
 inequality for the distribution of the particle system leads to
 quantitative deviation bounds on the approximation of the
 equilibrium solution of the equation by an empirical mean of the
 particles at given time.

 % In the end we briefly discuss the case of general convex potential
 % for the kinetic Fokker-Planck equation, illustrating the difficult
 % of such potential for the decay in Wasserstein distance. {\bf
 %   arno:} doit on en parler finalement???
\end{abstract}

%%%%%%%%%%%%%%%%%%%%%%%%%%%%%%%%%%%%%%%%%%%%%%%%%%%%%%%%%%%%%%%
\section*{Introduction and main results}

We are interested in the long time behaviour and in a particle
approximation of a distribution $f_t(x,v)$ in the space of positions
$x \in \rr^d$ and velocities $v \in \rr^d$ (with $d\geq 1$) evolving according to the
Vlasov-Fokker-Planck equation
\begin{equation}
\label{eq:vfp}
\displaystyle{ \frac{\partial f_t}{\partial t}}+v\cdot \nabla_x f_t - C*_x\rho[f_t] (x)  \cdot \nabla_v f_t%
=\Delta_v f_t+ \nabla_v\cdot( (A(v) + B(x)) f_t), \qquad t >0, \, x,v \in \rr^d
\end{equation}
where
$$
\rho[f_t](x)=\int_{\rr^d} f_t(x,v) \, dv
$$
is the macroscopic density in the space of positions $x \in \rr^d$ (or
the space marginal of $f_t$). Here $a \cdot b$ denotes the scalar
product of two vectors $a$ and $b$ in $\rr^{d}$ and $*_x$ stands for
the convolution with respect to $x \in \rr^{d}:$
$$
C*_x \rho[f_t](x) = \int_{\rr^d} C(x-y) \, \rho[f_t](y) \, dy =
\int_{\rr^{2d}} C(x-y) \, f_t(y,v) \, dy \, dv.
$$
Moreover $\nabla_x$ stands for the gradient with respect to the
position variable $x \in \rr^d$ whereas $\nabla_v$, $\nabla_v \cdot$
and $\Delta_v$ respectively stand for the gradient, divergence and
Laplace operators with respect to the velocity variable $v \in
\rr^{d}$.

\medskip

The $A(v)$ term models the friction, the $B(x)$ term models an
exterior confinement and the $C(x-y)$ term in the convolution models
the interaction between positions $x$ and $y$ in the underlying
physical system. For that reason we assume that $C$ is an odd map on
$\rr^d.$ This equation is used in the modelling of the distribution
$f_t(x,v)$ of diffusive, confined and interacting stellar or charged
matter when $C$ respectively derives from the Newton and Coulomb
potential (see~\cite{bouchut-dolbeault95} for instance). It has the
following natural probabilistic interpretation: if $f_0$ is a density
function, the solution $f_t$ of \eqref{eq:vfp} is the density of the 
law at time $t$ of the $\rr^{2d}$-valued process $(X_t, V_t)_{t\geq 0}$ 
evolving according to the mean field stochastic differential equation
(diffusive Newton's equations)
\begin{equation}\label{edsVFP}
 \left\{ \begin{array}{rcl}
     dX_t    & = &  V_t \, dt \\
     dV_t     & = & - A(V_t) \, dt- B(X_t) \, dt - C *_x \rho[f_t](X_t) \, dt+ \sqrt{2} \, dW_t.   
   \end{array}
 \right. 
\end{equation}
Here $(W_t)_{t \geq 0}$ is a Brownian motion in the velocity space
$\rr^{d}$ and $f_t$ is the law of $(X_t,V_t)$ in $\rr^{2d}$, so that $\rho[f_t]$ is the law of $X_t$ in $\rr^d.$

\bigskip

Space homogeneous models of diffusive and interacting granular media
(see \cite{BCCP}) have been studied by P.~Cattiaux and the last two authors in
particular~\cite{CGM}, \cite{malrieu01, malrieu03}, by means of a
stochastic interpretation analogous to~\eqref{edsVFP} and a particle
approximation analogous to~\eqref{eq:sdesys} below. They were
interpreted as gradient flows in the space of probability measures by
J. A. Carrillo, R. J. McCann and C. Villani \cite{CMV,CMV2} (see
also~\cite{villani-stflour}), both approaches leading to explicit
exponential (or algebraic for non uniformly convex potentials) {\it rates
of convergence to equilibrium}. Also possibly time-uniform propagation of chaos was proven for the associated particle system.

\medskip

Obtaining rates of convergence to equilibrium for~\eqref{eq:vfp} is
much more complex, as the equation simultaneously presents hamiltonian
and gradient flows aspects. Much attention has recently been called
to the linear noninteracting case of~\eqref{eq:vfp}, when $C=0$, also
known as the kinetic Fokker-Planck equation. First of all a
probabilistic approach based on Lyapunov functionals, and thus easy
to check conditions, lead D. Talay \cite{talay}, L. Wu \cite{wudamp}
or D. Bakry, P. Cattiaux and the second author \cite{BCG} to
exponential or subexponential convergence to equilibrium in total
variation distance. The case when $A(v)=v$ and $B(x) = \nabla \Psi(x)$, and when the equilibrium solution is explicitely given by $f_{\infty}(x,v) = e^{-\Psi(x) - \vert v
  \vert^2/2}$ is studied in \cite{herau07}, \cite{herau-nier04} and \cite[Chapter 7]{villani-hypo}: hypocoercivity analytic techniques are developed which, applied to this situation, give sufficient
conditions, in terms of Poincar\'e or logarithmic Sobolev
inequalities for the measure $e^{-\Psi}$, to $L^2$ or entropic
convergence with an explicit exponential rate. We also refer to~\cite{EGM} for the evolution of two species, modelled by two coupled Vlasov-Fokker-Planck equations. 

\medskip

C. Villani's approach extends to the selfconsistent situation when $C$
derives from a nonzero potential $U$ (see~\cite[Chapter
17]{villani-hypo}): replacing the confinement force $B(x)$ by a
periodic boundary condition, and for small and smooth potential $U$,
he obtains an explicit exponential rate of convergence of all
solutions toward the unique normalized equilibrium solution $e^{-\vert
 v \vert^2/2}.$

In this work we consider the case when the equation is set on the
whole $\rr^d$, with quadratic-like friction $A(v)$ and confinement
$B(x)$ forces, and small Lipschitz interaction $C(x)$: in the whole
paper we make the following

\noindent
{\bf Assumption.}
 We say that Assumption $(\cA)$ is fullfilled if there exist
 nonnegative constants $\alpha, \alpha', \beta, \gamma$ and $\delta$
 such that
$$
\vert A(v) - A(w) \vert \leq \alpha \vert v-w \vert, \quad (v-w) \cdot
(A(v)-A(w)) \geq \alpha' \vert v - w \vert^2,
$$
$$
B(x) = \beta \, x + D(x) \quad \textrm{where} \; \vert D(x) - D(y)
\vert \leq \delta \vert x-y \vert
$$
and
$$
\vert C(x) - C(y) \vert \leq \gamma \vert x-y \vert
$$
for all $x,y,v,w$ in $\rr^d.$ 

Convergence of solutions will be measured in terms of Wasserstein distances: let ${\mathcal P}_2$ be the space of Borel probability measures $\mu$ on $\rr^{2d}$ with finite second moment, that is, such that the integral $\ds \int_{\rr^{2d}} (\vert x \vert^2 + \vert v \vert^2) \, d\mu(x,v)$ be
finite. The space ${\mathcal P}_2$ is equipped with the
(Monge-Kantorovich) Wasserstein distance $d$ of order $2$ defined by
$$
d(\mu, \nu)^2 = \inf_{(X,V), \, (Y,W)} %
\dE\PAR{\ABS{ X-Y }^2 + \ABS{V - W}^2}
$$
where the infimum runs over all the couples $(X,V)$ and $(Y,W)$ of
random variables on $\rr^{2d}$ with respective laws $\mu$ and $\nu.$ Convergence in this metric is equivalent to narrow convergence plus convergence of the second moment (see~\cite[Chapter 6]{villani-stflour} for instance).

The coefficients $A, B$ and $C$ being Lipschitz, existence and uniqueness for Equation~\eqref{edsVFP} with square-integrable initial data are ensured by \cite{meleard}. It follows that, for all initial data $f_0$ in ${\mathcal P}_2$, Equation~\eqref{eq:vfp} admits a unique measure solution in  ${\mathcal P}_2$, that is, continuous on $[0, + \infty[$ with values in  ${\mathcal P}_2$.

Assumption $(\cA)$ is made in the whole paper. Under an additional
assumption on the smallness of $\gamma$ and $\delta$, we shall prove
a quantitative exponential convergence of all solutions to a unique
equilibrium:

\begin{thm}\label{theocveq}
 Under Assumption $(\cA)$, for all positive $\alpha, \alpha'$ and
 $\beta$ there exists a positive constant $c$ such that, if $0 \leq
 \gamma, \delta < c$, then there exist positive constants $C$ and
 $C'$ such that
\begin{equation}\label{pseudocontr}
d(f_t, \bar{f}_t) \leq C' \, e^{-C t} \, d(f_0,\bar{f}_0), \qquad t \geq 0
\end{equation}
for all solutions $(f_t)_{t \geq 0}$ and $(\bar{f}_t)_{t \geq 0}$
to~\eqref{eq:vfp} with respective initial data $f_0$ and $\bar{f}_0$ in
${\mathcal P}_2.$

Moreover~\eqref{eq:vfp} admits a unique stationnary solution
$\mu_{\infty}$ and all solutions $(f_t)_{t \geq 0}$ converge towards
it, with
$$
d(f_t, \mu_{\infty}) \leq C' e^{-C t} d(f_0,\mu_{\infty}), \qquad  t \geq 0.
$$
\end{thm}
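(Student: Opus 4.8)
The plan is to set up a coupling of two solutions of the SDE~\eqref{edsVFP} and estimate the decay of a well-chosen quadratic Lyapunov functional along the coupled paths. Let $(X_t,V_t)$ and $(\Xb_t,\Vb_t)$ solve~\eqref{edsVFP} with the \emph{same} Brownian motion $(W_t)$ and with initial data $(X_0,V_0)$, $(\Xb_0,\Vb_0)$ realizing the optimal coupling of $f_0$ and $\bar f_0$. Write $x_t = X_t - \Xb_t$ and $v_t = V_t - \Vb_t$; these are differentiable in $t$ (the noise cancels) and satisfy $\dot x_t = v_t$ and $\dot v_t = -(A(V_t)-A(\Vb_t)) - \beta x_t - (D(X_t)-D(\Xb_t)) - (C*_x\rho[f_t](X_t) - C*_x\rho[\bar f_t](\Xb_t))$. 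The linear part $\dot x = v$, $\dot v = -\alpha' v - \beta x$ has no genuine decay in the Euclidean norm $|x|^2+|v|^2$ because of the Hamiltonian cross term, so the key is to introduce a twisted quadratic form
\[
\Phi(x,v) = a\,|x|^2 + 2b\, x\cdot v + |v|^2,
\]
with constants $a>b^2>0$ chosen (depending only on $\alpha',\beta$, and an upper Lipschitz bound $\alpha$ on $A$) so that $\Phi$ is equivalent to $|x|^2+|v|^2$ and so that, for the linear drift, $\tfrac{d}{dt}\Phi \leq -2\kappa\,\Phi$ for some $\kappa>0$; this is the classical hypocoercivity trick for the kinetic linear equation, carried out here at the level of the (random) difference process.

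Next I would differentiate $\ee\,\Phi(x_t,v_t)$ and bound the contributions of the perturbation terms $D$, $A$'s non-quadratic part, and $C$. The friction term contributes $-2\,v_t\cdot(A(V_t)-A(\Vb_t)) \le -2\alpha'|v_t|^2$ (from the monotonicity in $(\cA)$) plus cross terms controlled using $|A(V_t)-A(\Vb_t)|\le\alpha|v_t|$; the $D$ term is bounded using $|D(X_t)-D(\Xb_t)|\le\delta|x_t|$; and the interaction term splits as
\[
C*_x\rho[f_t](X_t)-C*_x\rho[\bar f_t](\Xb_t)
 = \big(C*_x\rho[f_t](X_t)-C*_x\rho[f_t](\Xb_t)\big) + \big(C*_x(\rho[f_t]-\rho[\bar f_t])(\Xb_t)\big),
\]
the first piece bounded by $\gamma|x_t|$ (Lipschitz continuity of $C$, integrated against the probability measure $\rho[f_t]$) and the second by $\gamma\, d(f_t,\bar f_t)\le \gamma\,(\ee|x_t|^2+\ee|v_t|^2)^{1/2}$, using that $(X_t,\Xb_t)$ is an admissible coupling of $\rho[f_t]$ and $\rho[\bar f_t]$ so $\ee|x_t|^2 \ge d(\rho[f_t],\rho[\bar f_t])^2$, together with $1$-Lipschitz continuity of convolution against $C$ in Wasserstein distance and the elementary bound $d(f_t,\bar f_t)^2\le \ee|x_t|^2+\ee|v_t|^2$. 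Collecting terms, and using the norm-equivalence $\Phi \asymp |x|^2+|v|^2$, one gets
\[
\frac{d}{dt}\,\ee\,\Phi(x_t,v_t) \;\le\; -\big(2\kappa - K(\gamma+\delta)\big)\,\ee\,\Phi(x_t,v_t)
\]
for an explicit constant $K=K(\alpha,\alpha',\beta)$. Choosing $c$ so that $K(\gamma+\delta)<2\kappa$ whenever $\gamma,\delta<c$ (equivalently $c = \kappa/K$) yields, via Gr\"onwall, $\ee\,\Phi(x_t,v_t)\le e^{-2C t}\,\ee\,\Phi(x_0,v_0)$ with $2C = 2\kappa-K(\gamma+\delta)>0$. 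Undoing the norm equivalence gives $\ee(|x_t|^2+|v_t|^2)\le (C')^2 e^{-2Ct}\,\ee(|x_0|^2+|v_0|^2)$, and since the left side dominates $d(f_t,\bar f_t)^2$ and the initial data were optimally coupled, this is exactly~\eqref{pseudocontr}.

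The main obstacle, beyond bookkeeping, is twofold: first, producing the twisted form $\Phi$ with \emph{explicit} admissible constants $a,b,\kappa$ that survive the perturbation by the Lipschitz part $\alpha$ of $A$ (one cannot simply quote the linear hypocoercivity estimate, since $A$ need not be linear — one must redo the quadratic-form computation with the inequalities of $(\cA)$); and second, handling the genuinely nonlinear, nonlocal interaction term, where the estimate must close \emph{self-referentially} in $\ee\,\Phi$ — this is what forces the smallness $\gamma<c$ and is why the argument gives a contraction only in the perturbative regime. Once~\eqref{pseudocontr} is established, existence and uniqueness of the stationary solution $\mu_\infty$ follow by a standard fixed-point/Cauchy argument: the semigroup $f_0\mapsto f_t$ is a strict contraction up to the constant $C'$, so for $t$ large enough it is a genuine contraction on the complete metric space $(\mathcal P_2,d)$, giving a unique fixed point $\mu_\infty$; applying~\eqref{pseudocontr} with $\bar f_0=\mu_\infty$ (so $\bar f_t\equiv\mu_\infty$) gives the final convergence estimate.
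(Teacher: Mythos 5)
Your proposal is correct and follows the same overall architecture as the paper: couple two solutions of~\eqref{edsVFP} driven by the same Brownian motion, differentiate a twisted quadratic form $a|x|^2+2x\cdot v+b|v|^2$ along the deterministic difference equation, close the estimate via Gr\"onwall under a smallness condition on $\gamma+\delta$, and then get the stationary measure by a Banach fixed-point argument on a complete Wasserstein space. The one place where you genuinely diverge from the paper is the treatment of the nonlocal interaction term.

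You decompose $C*_x\rho[f_t](X_t)-C*_x\rho[\bar f_t](\Xb_t)$ into a ``same-measure, different-base-point'' piece (bounded pointwise by $\gamma|x_t|$) plus a ``same-base-point, different-measure'' piece (bounded by $\gamma\,W_2(\rho[f_t],\rho[\bar f_t])\le\gamma\,d(f_t,\bar f_t)\le\gamma\,(\ee|x_t|^2+\ee|v_t|^2)^{1/2}$), and then feed the self-referential Wasserstein bound back in after taking expectations. This is correct and, notably, does not use the oddness of $C$ at all. The paper instead exploits oddness through a doubling-of-variables symmetrization: writing the expectation against $d\pi_t\otimes d\pi_t$ and swapping the two copies turns the term into $-\int\big((Y-y)-(\bar Y-\bar y)\big)\cdot\big(C(Y-y)-C(\bar Y-\bar y)\big)\,d\pi_t\,d\pi_t$, which is then bounded by $\gamma$ times a second moment and in fact gains a centering term (a minus $|\ee(y-\bar y)|^2$). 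The upshot is that the paper's bound on the $x_t$-pairing is $2\gamma\,\ee|x_t|^2$, while your route yields the slightly weaker $2\gamma\,\ee|x_t|^2+2\gamma\,(\ee|x_t|^2+\ee|v_t|^2)$; similarly for the $v_t$-pairing. Both close the Gr\"onwall loop in the perturbative regime, but the paper's symmetrization delivers a somewhat larger admissible threshold $c$ (hence the explicit numerics $\gamma+\delta<0.26$ quoted after the statement), whereas your decomposition buys generality, since it would also apply to a non-odd interaction kernel. For the fixed-point step you apply Banach's theorem directly to $T(t_0)$ for $t_0$ large enough that $C'e^{-Ct_0}<1$, while the paper works in the equivalent metric $d_Q$, where the semigroup is a genuine contraction for every $t>0$, and invokes a standard lemma; both are sound, and yours is, if anything, more elementary.

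Two small remarks to tighten the writeup: you should make explicit that the bound on the second piece passes through $W_1\le W_2$ and the $\gamma$-Lipschitz property of $y\mapsto C(\Xb_t-y)$, and that the admissibility of $(X_t,V_t;\Xb_t,\Vb_t)$ as a coupling is what lets you dominate $d(f_t,\bar f_t)$ by $(\ee|x_t|^2+\ee|v_t|^2)^{1/2}$ before applying Gr\"onwall.
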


For instance, for $\alpha = \alpha' = \beta =1$, the general proof
below shows that the nonnegative $\gamma$ and $\delta$ with $\gamma +
\delta < 0,26$ are admissible. In the linear free case when $\gamma =
\delta =0,$ the convergence rate is given by $C = 1/3$, and for
instance for $\gamma$ and $\delta$ with $\gamma + \delta = 0,1$ we
obtain $C \sim 0,27.$

\medskip

Compared to Villani's results, convergence is here proven in the (weak) Wasserstein distance, not in $L^1$ norm, or relative entropy as in the noninteracting case - the latter being a stronger convergence since, in this specific situation, the equilibrium measure $e^{- \Psi(x) - \vert v \vert^2/2}$ satisfies a logarithmic Sobolev inequality, hence a transportation inequality. We refer to~\cite{logsob}, \cite{ledoux01} or \cite[Chapter 22]{villani-stflour} for this and forthcoming notions. 

However our result holds in the noncompact case with small Lipschitz interaction, and can be seen as a first attempt to deal with more general case. Moreover it shows existence and uniqueness of the equilibrium measure, and in particular does not use its explicit expression (which is unknown in our broader situation). It is also not only a result on the convergence to equilibrium, but also a stability result of all solutions. Let us finally note that it is based on the natural stochastic interpretation~\eqref{edsVFP} and a simple coupling argument, and does not need any hypoelliptic regularity property of the solutions.

\bigskip

The {\it particle approximation} of solutions to~\eqref{eq:vfp} consists in the introduction of a large number $N$ of $\rr^{2d}$-valued processes $(X^{i,N}_t,
V^{i,N}_t)_{t\geq 0}$ with $1 \leq i \leq N$, no more evolving  according to the force field $C \ast_x \nobreak \rho[f_t]$ generated by the distribution $f_t$ as in~\eqref{edsVFP}, but by the empirical measure $\ds \hat{\mu}^N_t = \frac{1}{N} \sum_{i=1}^N \delta_{(X^{i,N}_t, V^{i,N}_t)}$ of the system: if ${(W_\cdot^i)}_{i\geq 1}$ with $i \geq 1$ are independent standard Brownian motions
on $\rr^d$ and $(X^i_0,V^i_0)$ with $i \geq 1$ are independent random vectors on $\rr^{2d}$
with law $f_0$ in ${\mathcal P}_2$ and indepedent of ${(W^i_\cdot)}_{i\geq 1}$, we let  ${(X^{(N)}_t,V^{(N)}_t)}_{t\geq  0}={(X^{1,N}_t,\ldots,X^{N,N}_t,V^{1,N}_t,\ldots,V^{N,N}_t)}_{t\geq 0}$ be the solution of the following stochastic differential equation in $(\rr^{2d})^N$:
\begin{equation}
\label{eq:sdesys}
\begin{cases}
dX^{i,N}_t=V^{i,N}_t\,dt \\
\displaystyle{ dV^{i,N}_t=%
 - A(V^{i,N}_t) \, dt - B(X^{i,N}_t)  \, dt - \frac{1}{N}\sum_{j=1}^N C(X^{i,N}_t-X^{j,N}_t)\,dt%
  +\sqrt{2} \, dW^i_t,} \qquad 1 \leq i \leq N\\
(X^{i,N}_0,V_0^{i,N})=(X^{i}_0,V_0^{i}).
\end{cases}
\end{equation}

%the mean field stochastic differential equation
%The PDE~\eqref{eq:vfp} can be interpreted as the limiting behavior of
%a tagged particle among a large number of particles. More precisely,
%consider $N$ particles that interact in a mean field way (with an
%attracting force) in a thermal bath with a confinement potential. Let
%The particles undergo a friction, an exterior confinement and interact
%pairwise. 

The mean field force $C \ast_x \rho[f_t]$ in~\eqref{edsVFP} is replaced by the pairwise actions $\ds \frac{1}{N} C(X^{i,N}_t-X^{j,N}_t)$ of particle $j$ on particle $i.$ Since this interaction is of order $1/N$, it may be reasonable that two of these interacting particles (or a fixed number $k$ of them) become less and less correlated as $N$ gets large. 

In order to state this {\it propagation of chaos} property we let, for each $i \geq 1$,  $(\bar{X}^i_t,\bar{V}^i_t)_{t \geq 0}$ be the solution of the kinetic McKean-Vlasov type equation on $\dR^{2d}$
\begin{equation}
\label{eq:sdemkv}
\begin{cases}
  d\bar{X}_t^i=\bar{V}_t^i\,dt \\
  d\bar{V}_t^i=  - A(\bar{V}_t^i) \, dt - B(\bar{X}_t^i) \, dt  - C \ast_x \rho[\nu_t] (\bar{X}_t^i)\,dt + \sqrt{2} \, dW_t^i,\\
  (\bar{X}_0^i,\bar{V}_0^i)=(X_0^i,V_0^i).
\end{cases}
\end{equation}
where $\nu_t$ is the distribution of $(\bar{X}^i_t,\bar{V}^i_t).$ The processes ${(\bar{X}^i_t,\bar{V}^i_t)_{t \geq 0}}$ with $i\geq 1$ are independent since the initial conditions and driving Brownian motions are independent. Moreover they are identically distributed and their common law at time $t$ evolves according to~\eqref{eq:vfp}, so is the solution $f_t$ of~\eqref{eq:vfp} with initial datum $f_0.$ In this notation, and as $N$ gets large, the $N$ processes $(X^{i,N}_t, V^{i,N}_t)_{t\geq 0}$ look more and more like the $N$ independent processes ${(\bar{X}^i_t,\bar{V}^i_t)_{t \geq 0}}$:

\begin{thm}[Time-uniform propagation of chaos]\label{th:chaos}
 Let $(X^i_0, V^i_0)$ for $1 \leq i \leq N$ be $N$ independent
 $\rr^{2d}$-valued random variables with law $f_0$ in ${\mathcal
   P}_2(\rr^{2d}).$ Let also $(X^{i,N}_t, V^{i,N}_t)_{t \geq 0, 1
   \leq i \leq N}$ be the solution to~\eqref{edsVFP} and
 $(\bar{X}^i_t, \bar{V}^i_t)_{t \geq 0}$ the
 solution to~\eqref{eq:sdemkv} with initial datum $(X^i_0, V^i_0)$ for $1
   \leq i \leq N.$ Under Assumption $(\cA)$, for all positive
 $\alpha, \alpha'$ and $\beta$ there exists a positive constant $c$
 such that, if $0 \leq \gamma, \delta < c$, then there exists a positive
 constant $C$, independent of $N$, such that for $i=1,\ldots,N$
$$
\sup_{t\geq 0} \,
\dE \, \Big(\ABS{X^{i,N}_t-\bar{X}^i_t}^2+\ABS{V^{i,N}_t-\bar{V}^i_t}^2 \Big)%
\leq \frac{C}{N} \cdot
$$
\end{thm}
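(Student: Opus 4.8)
The plan is to use a synchronous coupling: drive the particle system~\eqref{eq:sdesys} and the nonlinear processes~\eqref{eq:sdemkv} with the \emph{same} Brownian motions $W^i$ and the \emph{same} initial data $(X^i_0,V^i_0)$, and set $Z^i_t = X^{i,N}_t - \bar X^i_t$, $U^i_t = V^{i,N}_t - \bar V^i_t$. Since the diffusion coefficients are constant and identical, the stochastic terms cancel and $(Z^i,U^i)$ solves the random differential equation
\begin{equation*}
\dot Z^i_t = U^i_t, \qquad \dot U^i_t = -\big(A(V^{i,N}_t)-A(\bar V^i_t)\big) - \big(B(X^{i,N}_t)-B(\bar X^i_t)\big) - \Delta C^i_t ,
\end{equation*}
where the interaction discrepancy splits as $\Delta C^i_t = E^i_{1,t} + E^i_{2,t}$ with
\begin{equation*}
E^i_{1,t} = \frac1N \sum_{j=1}^N \Big( C(X^{i,N}_t - X^{j,N}_t) - C(\bar X^i_t - \bar X^j_t)\Big), \qquad E^i_{2,t} = \frac1N \sum_{j=1}^N C(\bar X^i_t - \bar X^j_t) - C \ast_x \rho[f_t](\bar X^i_t).
\end{equation*}
Using the Lipschitz bound on $C$ from $(\cA)$ (and $C(0)=0$ since $C$ is odd) one gets $\vert E^i_{1,t}\vert \leq \frac{\gamma}{N}\sum_j (\vert Z^i_t\vert + \vert Z^j_t\vert)$, so after averaging over $i$ this term is controlled by the mean energy and plays exactly the role of the selfconsistent interaction term in Theorem~\ref{theocveq}; the new ingredient is the fluctuation term $E^i_{2,t}$.

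Next, introduce the twisted quadratic form $\Phi(z,u) = a\vert z\vert^2 + 2b\, z\cdot u + c\vert u\vert^2$, with $b^2<ac$ so that $\Phi$ is equivalent to $\vert z\vert^2 + \vert u\vert^2$, chosen with the same positive constants $a,b,c$ that produce the contraction in Theorem~\ref{theocveq} (the cross term $z\cdot u$ is what forces dissipation of the Hamiltonian part $\dot Z = U$). Put $h(t) = \dE\,\Phi(Z^1_t,U^1_t)$, which by exchangeability equals $\frac1N\sum_i \dE\,\Phi(Z^i_t,U^i_t)$. Differentiating and averaging over $i$, the contributions of $A$, of $B = \beta\,\mathrm{id} + D$, and of $E^i_{1,t}$ are bounded exactly as in the proof of Theorem~\ref{theocveq}, and under the smallness assumption on $\gamma,\delta$ they yield $-\lambda h(t)$ for some $\lambda>0$, up to the extra contribution of $E^i_{2,t}$. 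That contribution, of the form $\frac1N\sum_i \dE\big[(b Z^i_t + c U^i_t)\cdot E^i_{2,t}\big]$, is bounded by Cauchy--Schwarz and Young's inequality by $\tfrac{\lambda}{2}h(t) + K\,\dE\vert E^1_{2,t}\vert^2$. Hence $h'(t) \leq -\tfrac{\lambda}{2}h(t) + K\,\dE\vert E^1_{2,t}\vert^2$, and since $h(0)=0$ (both systems start from the same data) Gronwall gives $h(t) \leq K\int_0^t e^{-\lambda(t-s)/2}\,\dE\vert E^1_{2,s}\vert^2\, ds$.

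It then suffices to prove $\sup_{s\geq 0}\dE\vert E^1_{2,s}\vert^2 \leq C/N$, which yields $h(t)\leq 2KC/(\lambda N)$ uniformly in $t$ and, by equivalence of $\Phi$ with the Euclidean norm, the claimed bound. This is where the real work lies. For fixed $i$, conditionally on $\bar X^i_s$ the variables $C(\bar X^i_s-\bar X^j_s)$ with $j\neq i$ are i.i.d. with common mean $C\ast_x\rho[f_s](\bar X^i_s)$ (the $j=i$ term vanishing), so $E^1_{2,s}$ is a sum of $N-1$ conditionally independent centered terms plus a bias of size $O(1/N)$; a variance estimate using the linear growth of $C$ reduces the bound to controlling $\sup_{s\geq 0}\big(\dE\vert \bar X^i_s\vert^2 + \dE\vert \bar V^i_s\vert^2\big)$. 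This uniform-in-time second moment bound for~\eqref{eq:sdemkv} is obtained by applying the same functional $\Phi$ to $(\bar X^i_s,\bar V^i_s)$ itself: the strict friction $\alpha'>0$ and confinement $\beta>0$ (with $\gamma,\delta$ small) give a drift inequality $\frac{d}{dt}\dE\,\Phi(\bar X^i_t,\bar V^i_t) \leq -\lambda'\,\dE\,\Phi(\bar X^i_t,\bar V^i_t) + K'$, hence a uniform bound by Gronwall. The main obstacle is therefore not the coupling but assembling these two pieces — the hypocoercive Lyapunov functional forcing dissipation of the Hamiltonian part, and the uniform moment bound underlying the $O(1/N)$ fluctuation estimate — and checking that the smallness thresholds on $\gamma$ and $\delta$ demanded at each step are compatible; one takes $c$ in the statement as the minimum of all these thresholds and of the one in Theorem~\ref{theocveq}.
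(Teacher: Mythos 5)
Your proposal follows essentially the same route as the paper's proof: synchronous coupling with common Brownian motions and initial data, the twisted quadratic Lyapunov functional $\Phi$ with an $x\cdot v$ cross term to induce dissipation, the same decomposition of the interaction discrepancy into a Lipschitz-controlled term and a fluctuation term, exploitation of conditional independence (given $\bar X^i_t$, the summands are centered for $j\neq i$ with cross-covariances vanishing) to get an $O(1/N)$ variance, and the uniform-in-time second moment bound on the nonlinear process (Lemma~\ref{2moment} in the paper) to make the variance constant time-uniform. The only cosmetic differences are that you bound $E^i_{1,t}$ by a crude pointwise norm estimate rather than the symmetrization the paper uses, and that you close the argument via the integrated Gronwall form rather than the differential inequality directly; both are immaterial.
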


Here the constant $C$ depends only on the coefficients of the equation and the second moment of~$f_0.$

\begin{rem}\label{rem:cvmarg}
In particular the law $f_t^{(1,N)}$ at time $t$ of any $(X^{i,N}_t, V^{i,N}_t)$ (by symmetry) converges to $f_t$ as $N$ goes to infinity, according to
$$
d(f_t^{(1,N)}, f_t)^2 \leq \dE \, \Big( \ABS{X^{i,N}_t-\bar{X}^i_t}^2+\ABS{V^{i,N}_t-\bar{V}^i_t}^2 \Big) \leq \frac{C}{N} \cdot
$$
\end{rem}
%
%\FIX: {\bf FB}: mis en remarque car on y fait appel a la fin de l'avant-derniere section. C'est moche ?

%\begin{rem}
%One may suppose $B$ to be identically null, as in the case of the granular media equation (see \cite{malrieu03,CGM}), but we have then to impose a convexity condition on $C$ and moreover consider the recentered particles system, i.e. substracting empirical mean position/velocity. The results then will be the same (with additional technicalities in proofs). {\bf FB:} c'est vrai \c ca ??? {\bf FM} Pas sur. C'est peut-etre le cas mais je ne sais pas si cela vaut le coup, dans la mesure ou l'on n'a pas une equation super physique d'annoncer un resultat dans cette direction. Si ? Pour moi, l'idee du papier est de populariser l'astuce du "changement de norme qui fournit un Gronwall". {\bf FB:} apres reflexion, ca n'a pas trop de sens d'avoir une force $C(x-y)$ qui tend vers l'infini si la distance relative  $x-y$ tend vers l'infini ... Que $C(v-w)$ tende vers l'infini avec la vitesse relative $v-w$,  comme pour les milieux granulaires, ok, mais pour la distance entre les particules (interaction en position), ca devient moins "physique", non ?? Donc je serais d'avis de supprimer cette remarque.
%\end{rem}

\bigskip

Propagation of chaos at the level of the {\it trajectories}, and not only of the time-marginals, is estimated in~\cite{bolley-paths} and \cite{meleard} for a broad class of equations, but with non time-uniform constants.

\bigskip 

We finally turn to the approximation of the equilibrium solution of the Vlasov-Fokker-Planck equation (as given by Theorem~\ref{theocveq}) by the particle system at a given time $T$.

Since all solutions $(f_t)_{t \geq 0}$ to~\eqref{eq:vfp} with initial data in ${\mathcal P}_2$ converge to the equilibrium solution $\mu_{\infty}$, we let $(x_0, v_0)$ in $\rr^{2d}$ be given and we consider the Dirac mass $\delta_{(x_0, v_0)}$ at $(x_0, v_0)$ as the initial datum $f_0$. We shall give precise bounds on the approximation of $\mu_{\infty}$ by the empirical measure of the particles $(X^{i,N}_t,
V^{i,N}_t)$ for $1 \leq i \leq N$, all of them initially at $(x_0, v_0).$

In the space homogeneous case of the granular media equation, this was performed by the third author~\cite{malrieu01,malrieu03} by proving a logarithmic Sobolev inequality for the joint law $f_t^{(N)}$ of the $N$ particles at time $t$. In turn this inequality was proved by a Bakry-Emery curvature criterion (see~\cite{bakry-emery}). The argument does not work here as the particle system has $- \infty$ curvature, and we shall only prove a (Talagrand) $T_2$ transportation inequality for the joint law of the particles.

\begin{rem} 
At this stage we have to point out that, for instance when the force fields $A, B$ and $C$ are gradient of potentials, the invariant measure of the particle system, that is, the large time limit of the joint law of the $N$ particles, is explicit and satisfies a logarithmic Sobolev inequality with carr\'e du champ $\vert \nabla_x f \vert^2 + \vert \nabla_v f \vert^2;$ however it does not satisfy a logarithmic Sobolev inequality with carr\'e du champ $\vert \nabla_v f \vert^2$ (initiated by our dynamics), which would at once lead to exponential entropic convergence to equilibrium for the particle system. 
\end{rem}

%\FIX {\bf: FB} on a discute de ce paragraphe avec Arnaud et j'ai fait des modifs. Entre pourcentages les paragraphes coup\'es a cet endroit, \'eventuellement \`a r\'ecup\'erer si on veut modifier

%prove a transportation inequality $T_2$, also called Talagrand's inequality, for the law at time $T$ of the particle %system with a constant  independent of $T$ and of the number of particles. From this we deduce precise 
%deviation bounds in the approximation by the particle system of Lipschitz observables of the equilibrium solution %$f_{\infty}.$ \\\medskip
%This inequality was obtained by Malrieu for the granular media equation, by proving a logarithmic Sobolev 
%inequality, which is known to imply a $T_2$ inequality (see \cite{ov,bgl}). In turn the logarithmic Sobolev 
%inequality was proved by a Bakry-Emery approach, namely, by a curvature criterion on the invariant measure of %the particle system. Note that here the invariant measure of the particles system still verifies by the same proof a %logarithmic Sobolev inequality with constant independent of the number of particles, but there is no logarithmic %Sobolev inequality here as the Dirichlet form generated by our dynamic is of the form
%$$\int |\nabla_v f|^2d\mu_\infty^N$$
%which vanishes for function $f$ only depending of the position, whereas the entropy may still be positive. Hence %we have to prove the $T_2$ inequality by other means.

Let us recall that a probability measure $\mu$ on $\rr^{2d}$ is said to satisfies a $T_2$ transportation inequality if there exists a constant $D$ such that 
$$
d(\mu,\nu)^2\le D~H(\nu|\mu)
$$
for all probability measure $\nu$; here 
$$
H(\nu|\mu)=\int \log\left(\frac{d\nu}{d\mu}\right)d\nu
$$
if $\nu\ll\mu$ and $+\infty$ otherwise is the relative entropy of $\nu$ with respect to $\mu$.

\medskip

\begin{thm}\label{th:T2}
Under the assumptions of Theorem \ref{theocveq}, for all positive $\alpha,\alpha'$ and $\beta$ there exists a positive constant $c$ such that if $0 \leq \gamma,\delta<c$, then the joint law of the $N$ particles $(X_T^{(i,N)},V_T^{(i,N)})$ at given time $T$, all with deterministic starting points $(x_0,v_0) \in \rr^{2d}$, satisfies a $T_2$ inequality with a constant $D$ independent of the number $N$ of particles, of time $T$ and of the point $(x_0, v_0)$.\\
It follows that there exists a constant $D'$ such that
$$
\dP \left(\frac1N\sum_{i=1}^Nh(X^{i,N}_T,V^{i,N}_T)- \int_{\rr^{2d}}  h\, d\mu_\infty(h)\ge r + D'\left(\frac{1}{\sqrt{N}}+e^{-CT}\right)\right)\le\exp\left(-\frac{Nr^2}{2D}\right)
$$
for all $N, T, r \geq 0$ and all $1$-Lipschitz observables $h$ on $\rr^{2d}.$
\end{thm}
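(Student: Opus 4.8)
The plan is to use the Girsanov-based approach to transportation inequalities for laws of diffusions, but to replace the usual Gronwall step by the dissipative coupling estimate that already underlies Theorems~\ref{theocveq} and~\ref{th:chaos}; this is precisely what will make the $T_2$ constant independent of $T$ (and of $N$).

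Fix $T>0$ and $(x_0,v_0)$, let $\dP^N$ be the law on $C([0,T],(\rr^{2d})^N)$ of the solution of~\eqref{eq:sdesys} issued from $(x_0,v_0)^{\otimes N}$, and let $\mu^N_T$ be its time-$T$ marginal, that is, the joint law of the $N$ particles at time $T$. Let $\nu$ be a probability measure on $(\rr^{2d})^N$; one may assume $H(\nu|\mu^N_T)<\infty$, otherwise the inequality is trivial. Lift $\nu$ to the path-space measure $\dQ^N$ with $\frac{d\dQ^N}{d\dP^N}(\omega)=\frac{d\nu}{d\mu^N_T}(\omega_T)$: then $\dQ^N$ has time-$T$ marginal $\nu$, still starts almost surely from $(x_0,v_0)^{\otimes N}$, and the chain rule for relative entropy gives $H(\dQ^N|\dP^N)=H(\nu|\mu^N_T)$. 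Since the noise in~\eqref{eq:sdesys} acts only on the velocity variables, Girsanov's theorem shows that under $\dQ^N$ the canonical process solves~\eqref{eq:sdesys} with an extra adapted drift $u^i_t$ added to the $i$-th velocity equation, and that
\[
H(\dQ^N|\dP^N)=\frac14\sum_{i=1}^N\dE_{\dQ^N}\int_0^T\ABS{u^i_t}^2\,dt.
\]
Driving the original system and the perturbed one by the same Brownian motions produces, under $\dQ^N$, a coupling of $\mu^N_T$ and $\nu$: one process $(X^{i,N},V^{i,N})_i$ with law $\dP^N$ and the canonical process $(\widetilde X^{i,N},\widetilde V^{i,N})_i$ with law $\dQ^N$, both issued from $(x_0,v_0)^{\otimes N}$. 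Writing $z^i_t:=(X^{i,N}_t-\widetilde X^{i,N}_t,\,V^{i,N}_t-\widetilde V^{i,N}_t)\in\rr^{2d}$, the Brownian terms cancel and $(z^i_t)_i$ solves a pathwise ODE forced only by $-u^i_t$ in its velocity components.

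The heart of the matter is the pathwise estimate
\[
\sum_{i=1}^N\ABS{z^i_T}^2\le\kappa_0\int_0^T\sum_{i=1}^N\ABS{u^i_t}^2\,dt
\]
with a constant $\kappa_0$ depending only on $\alpha,\alpha',\beta$ and valid as soon as $\gamma,\delta$ are small enough. It would be obtained exactly as Theorems~\ref{theocveq} and~\ref{th:chaos}: work with the same modified Euclidean norm $\NRM{\cdot}_\ast$ on $\rr^{2d}$ that makes the single-particle $(x,v)$-dynamics strictly dissipative (this uses the smallness of $\delta$, so that the Lipschitz perturbation $D(x)$ of the linear force $\beta x$ preserves dissipativity), write the differential inequality for $S_t:=\sum_i\NRM{z^i_t}_\ast^2$, control the empirical-interaction contribution across the indices using that $C$ is odd and $\gamma$-Lipschitz with operator norm bounded by a multiple of $\gamma$ uniformly in $N$ (this uses the smallness of $\gamma$), and absorb the forcing term $\sum_i\ABS{u^i_t}\,\ABS{V^{i,N}_t-\widetilde V^{i,N}_t}$ by Young's inequality. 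For $\gamma,\delta$ small enough one gets $\frac{d}{dt}S_t\le-\lambda S_t+\kappa_0\sum_i\ABS{u^i_t}^2$ for some $\lambda>0$; since $S_0=0$ whatever $(x_0,v_0)$ is, integration gives the displayed bound, which depends neither on $T$, nor on $N$, nor on $(x_0,v_0)$. Taking $\dE_{\dQ^N}$ and noting that $(X^{i,N}_T,V^{i,N}_T)_i$ and $(\widetilde X^{i,N}_T,\widetilde V^{i,N}_T)_i$ have laws $\mu^N_T$ and $\nu$, the left-hand side bounds $d(\mu^N_T,\nu)^2$ from above, so with the entropy identity $d(\mu^N_T,\nu)^2\le4\kappa_0\,H(\nu|\mu^N_T)$: this is the claimed $T_2$ inequality with $D=4\kappa_0$. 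I expect this dissipative-coupling estimate --- keeping the constant simultaneously free of $N$ (smallness of $\gamma$) and of $T$ (contraction replacing the naive Gronwall bound) --- to be the main obstacle; the remaining steps are routine.

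For the last assertion, the Marton / Bobkov--G\"otze argument turns the $T_2$ inequality with constant $D$ on $(\rr^{2d})^N$, equipped with the Euclidean distance, into the Gaussian concentration bound $\dP(F\ge\dE F+s)\le e^{-s^2/(2D)}$ for every $1$-Lipschitz function $F$ on $(\rr^{2d})^N$. For a $1$-Lipschitz observable $h$ on $\rr^{2d}$, the empirical mean $F=\frac1N\sum_{i=1}^Nh(X^{i,N}_T,V^{i,N}_T)$, seen as a function on $(\rr^{2d})^N$, is $\frac1{\sqrt N}$-Lipschitz by Cauchy--Schwarz, so applying the bound to $\sqrt N\,F$ with $s=\sqrt N\,r$ gives $\dP(F\ge\dE F+r)\le e^{-Nr^2/(2D)}$. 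It remains to replace $\dE F$ by $\int h\,d\mu_\infty$: by exchangeability $\dE F=\dE\,h(X^{1,N}_T,V^{1,N}_T)$; Theorem~\ref{th:chaos} (or Remark~\ref{rem:cvmarg}) together with $1$-Lipschitzness gives $\ABS{\dE F-\int h\,df_T}\le(C/N)^{1/2}$, where $f_T$ is the solution of~\eqref{eq:vfp} with $f_0=\delta_{(x_0,v_0)}$; and Theorem~\ref{theocveq} gives $\ABS{\int h\,df_T-\int h\,d\mu_\infty}\le d(f_T,\mu_\infty)\le C'e^{-CT}d(\delta_{(x_0,v_0)},\mu_\infty)$. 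Hence $\dE F-\int h\,d\mu_\infty\le D'(N^{-1/2}+e^{-CT})$ for a constant $D'$ (depending on the coefficients, the second moment of $\mu_\infty$, and on $(x_0,v_0)$ through $d(\delta_{(x_0,v_0)},\mu_\infty)$), so that $\big\{F\ge\int h\,d\mu_\infty+r+D'(N^{-1/2}+e^{-CT})\big\}\subseteq\{F\ge\dE F+r\}$, an event of probability at most $e^{-Nr^2/(2D)}$, which is the desired estimate.
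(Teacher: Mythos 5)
Your proof is correct and follows essentially the same route as the paper: Girsanov representation of relative entropy on path space, a synchronous coupling that makes the Brownian contributions cancel, a change of metric on $\rr^{2d}$ to a positive quadratic form $a|x|^2+2x\cdot v+b|v|^2$ under which the drift is dissipative, smallness of $\gamma,\delta$ to absorb the interaction and the confinement perturbation uniformly in $N$ and $T$, and finally the Bobkov--G\"otze transfer from $T_2$ to Gaussian concentration combined with Theorems~\ref{theocveq} and~\ref{th:chaos} (through Remark~\ref{rem:cvmarg}) to replace the mean of the empirical average by $\int h\,d\mu_\infty$ at a $D'(N^{-1/2}+e^{-CT})$ price.

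The only real difference is one of presentation. The paper, following~\cite{DGW}, states a $T_2$ inequality at the level of the trajectory law $\dP$ on $\cC([0,T],\rr^{2dN})$ and ``projects'' at time $T$; you instead fix the target marginal $\nu$, lift it to path space via the conditional law $d\dQ^N/d\dP^N(\omega)=\tfrac{d\nu}{d\mu^N_T}(\omega_T)$, exploit $H(\dQ^N\vert\dP^N)=H(\nu\vert\mu^N_T)$, and prove the estimate directly for the time-$T$ coupling. Your variant is slightly cleaner --- the projection from an $L^2([0,T])$-cost $T_2$ inequality down to the endpoint marginal is not a literal $1$-Lipschitz projection, and what the paper really uses (as you do) is the Gronwall estimate evaluated at $t=T$, whose constant is indeed uniform in $T$. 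Substantively, your Girsanov normalization $H(\dQ^N\vert\dP^N)=\tfrac14\sum_i\dE^{\dQ^N}\!\int_0^T|u^i_t|^2dt$ matches the paper's $\tfrac12\dE^{\dQ}\!\int_0^T|\beta_t|^2dt$ under $u=\sqrt2\,\beta$, your pathwise differential inequality for $S_t=\sum_i\NRM{z^i_t}_\ast^2$ (using oddness of $C$ to symmetrize the empirical interaction and obtain a bound uniform in $N$) is precisely the paper's Step~2 estimate on $\dE^\dQ Q(x_t,v_t)$, and the deviation argument via exchangeability and the $1/\sqrt N$-Lipschitz property of the empirical mean is the same decomposition the paper writes out. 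No gaps.
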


Here the constant $C$ has been obtained in Theorem~\ref{theocveq} and the constant $D'$ depends only on the point $(x_0, v_0)$ and the coefficients of the equation.

%{\FIX \bf FB}: "Etes-vous d'accord que" D ne d\'epend pas du point ? que D' ne d\'epend que du point et de l'\'equation ? avec l'in\'egalit\'e de d\'eviation ? (il y avait un -D' ...), je pense que c'est mieux avec $+$ , j'ai donc modifi\'e la preuve dans la derniere section. Aussi j'ai fixe h a etre 1-Lipschitz pour eviter de definir la seminorme .... aussi si la seminorme de h n'est pas 1 lipschitz, en remplacant h par h/ seminorme, par homogeneite on ne trouve pas exactement ce qui etait ecrit avant ... Aussi j'ai remplace $\dP_{(x_0^{(N)},v_0^{(N)})}$ par un $\dP$ tout simple - pour une revue d'edp c'est plus simple, non ? ... mais vous pouvez revenir a la premiere version !

\begin{rem}
Such single observable deviation inequalities were obtained in~\cite{malrieu01} for the space homogeneous granular media equation; they were upgraded in  \cite{BGV} to the very level of the measures, and to the level of the density of the equilibrium solution. The authors believe that such estimates can also be obtained in the present case.
\end{rem}

\begin{rem}
Let us also point out that if we do not suppose a confinement/convexity assumption as in (${\cal A}$) but only Lipschitz regularity on the drift fields $A, B$ and $C$, then Theorems~\ref{theocveq}, \ref{th:chaos} and~\ref{th:T2} still hold but with constants growing exponentially fast with time $T$.
\end{rem}

Sections~\ref{sect-cveq},~\ref{sect-prop} and \ref{sect-TI} are respectively devoted to the proofs of Theorems~\ref{theocveq},~\ref{th:chaos} and \ref{th:T2}.

%%%%%%%%%%%%%%%%%%%%%%%%%%%%%%%%%%%%
%%%%%%%%%%%%%%%%%%%%%%%%%%%%%%%%%%%%

\section{Long time behaviour for the Vlasov-Fokker-Planck
 equation}\label{sect-cveq}

%Our purpose is to focus on getting explicit quantitative bound for the
%long time behaviour of the solution of the pde~\eqref{eq:vfp}.

This section is devoted to the proof of Theorem~\ref{theocveq}, which is based on the stochastic interpretation~\eqref{edsVFP} of~\eqref{eq:vfp} and a coupling argument. It uses the idea, also present in~\cite{talay} and~\cite{villani-hypo}, of perturbing the
Euclidean metric on $\dR^{2d}$ in such a way that \eqref{edsVFP} is
dissipative for this metric.

If $Q$ is a positive quadratic form on $\rr^{2d}$ and $\mu$ and $\nu$
are two probability measures in $\mathcal P_2$ we let
$$ 
d_Q(\mu, \nu)^2 = \inf_{(X,V), \, (Y,W)} \dE\PAR{Q((X,V) - (Y,W))}
$$
where again the infimum runs over all the couples $(X,V)$ and $(Y,W)$
of random variables on $\rr^{2d}$ with respective laws $\mu$ and
$\nu$; so that $d_Q = d$ if $Q$ is the squared Euclidean norm on
$\rr^{2d}.$ The key step in the proof is the following

\begin{prop}\label{propcveq}
 Under the assumptions of Theorem~\ref{theocveq}, there exist a
 positive constant $C$ and a positive quadratic form $Q$ on
 $\rr^{2d}$ such that
 $$
 d_Q(f_t, \bar{f}_t) \leq e^{-C t} \, d_Q(f_0,\bar{f}_0), \qquad t
 \geq 0
 $$
 for all solutions $(f_t)_{t \geq 0}$ and $(\bar{f}_t)_{t \geq 0}$
 to~\eqref{eq:vfp} with respective initial data $f_0$ and $\bar{f}_0$
 in ${\mathcal P}_2$.
\end{prop}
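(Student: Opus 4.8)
The plan is to run a synchronous coupling between two solutions of the stochastic differential equation \eqref{edsVFP}, driven by the same Brownian motion, and to track the evolution of a suitably weighted quadratic functional of the difference process. Concretely, let $(X_t,V_t)$ and $(\Xb_t,\Vb_t)$ solve \eqref{edsVFP} with the same Brownian motion $(W_t)$ and initial data distributed according to an optimal $d_Q$-coupling of $f_0$ and $\bar f_0$; set $x_t=X_t-\Xb_t$ and $v_t=V_t-\Vb_t$. Because the Brownian terms cancel, the difference process is the solution of the random ordinary differential equation
\begin{equation*}
\dot x_t = v_t, \qquad \dot v_t = -\big(A(V_t)-A(\Vb_t)\big) - \beta\, x_t - \big(D(X_t)-D(\Xb_t)\big) - \big(C\ast_x\rho[f_t](X_t) - C\ast_x\rho[\bar f_t](\Xb_t)\big).
\end{equation*}
First I would estimate each term in absolute value using Assumption $(\cA)$: the friction term contributes a good sign via $(v_t)\cdot(A(V_t)-A(\Vb_t))\geq \alpha'|v_t|^2$, the $D$ and $C$ terms are bounded by $\delta|x_t|$ and (after controlling the convolution by the Wasserstein distance $d(f_t,\bar f_t)$, itself bounded in $L^2$ by $\dE(|x_t|^2+|v_t|^2)$ since $(X_t,\Vb_t,\ldots)$ is a coupling) by $\gamma$ times something comparable to $(\dE|x_t|^2)^{1/2}$. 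The nontrivial point is that the confinement only appears as $-\beta x_t$ in the $\dot v$ equation, so the naive functional $\dE(|x_t|^2+|v_t|^2)$ is not dissipative (the Hamiltonian part $\dot x=v$, $\dot v=-\beta x$ is a pure rotation). This is exactly why a non-Euclidean quadratic form $Q$ is needed.

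The key step is therefore to choose $Q((x,v)) = a|x|^2 + 2b\,x\cdot v + c|v|^2$ with $a,c>0$ and $ac>b^2$ (so $Q$ is positive definite), and to compute $\frac{d}{dt}\dE\,Q((x_t,v_t))$ along the coupled dynamics. The cross term $2b\,x\cdot v$ is what makes the confinement visible: differentiating it produces $2b|v_t|^2 + 2b\,x_t\cdot\dot v_t$, and the $-\beta x_t$ piece of $\dot v_t$ yields $-2b\beta|x_t|^2$, a genuinely dissipative contribution in the $x$-direction. I would then collect all contributions and bound the derivative by a quadratic form in $(\dE|x_t|^2)^{1/2}$ and $(\dE|v_t|^2)^{1/2}$, whose coefficients involve $a,b,c,\alpha,\alpha',\beta,\gamma,\delta$. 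The goal is to show that, in the free Lipschitz-perturbation-free case $\gamma=\delta=0$, one can pick $a,b,c$ (depending only on $\alpha',\beta$) and a constant $C_0>0$ so that $\frac{d}{dt}\dE\,Q((x_t,v_t)) \leq -2C_0\,\dE\,Q((x_t,v_t))$; this is a finite-dimensional linear-algebra computation (essentially: the $2\times 2$ matrix of the linearized drift, conjugated by $Q$, is uniformly negative).

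Once the strict dissipativity is established for $\gamma=\delta=0$ with some margin, a continuity/perturbation argument handles small $\gamma,\delta$: the extra terms contribute at most $\mathrm{const}\cdot(\gamma+\delta)\cdot\dE(|x_t|^2+|v_t|^2)$, which is $\leq \mathrm{const}\cdot(\gamma+\delta)\cdot\dE\,Q((x_t,v_t))$ up to the (fixed) norm-equivalence constants between $Q$ and the Euclidean form; hence for $\gamma,\delta$ below an explicit threshold $c$ one still gets $\frac{d}{dt}\dE\,Q((x_t,v_t)) \leq -2C\,\dE\,Q((x_t,v_t))$ for some $C>0$. Grönwall's lemma then gives $\dE\,Q((x_t,v_t)) \leq e^{-2Ct}\,\dE\,Q((x_0,v_0))$, and since $(X_t,V_t)$ and $(\Xb_t,\Vb_t)$ are a (not necessarily optimal) coupling of $f_t$ and $\bar f_t$ while the initial coupling was $d_Q$-optimal, we conclude $d_Q(f_t,\bar f_t)^2 \leq \dE\,Q((x_t,v_t)) \leq e^{-2Ct}\,\dE\,Q((x_0,v_0)) = e^{-2Ct}\,d_Q(f_0,\bar f_0)^2$, which is the claim. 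The main obstacle I anticipate is the bookkeeping in choosing $(a,b,c)$ and extracting an explicit rate $C$ and threshold $c$: one must be careful that the convolution term is controlled by $d(f_t,\bar f_t)$ and not pointwise, and that all constants remain uniform; but conceptually this is a routine, if delicate, optimization of a small quadratic form.
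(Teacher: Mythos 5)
Your plan is correct and takes essentially the same route as the paper: synchronous coupling of two solutions of \eqref{edsVFP} driven by the same Brownian motion, a perturbed quadratic Lyapunov functional $Q(x,v)=a|x|^2+2\,x\cdot v+b|v|^2$ whose cross term makes the $-\beta x$ confinement dissipative, Young's inequality to absorb the mixed terms, optimization of the free parameters, and Gr\"onwall. The one notable difference is how the mean-field term is estimated. You propose to split $C\ast_x\rho[f_t](X_t)-C\ast_x\rho[\bar f_t](\bar X_t)$ by the triangle inequality into a ``Lipschitz in $X$'' piece bounded by $\gamma|x_t|$ plus a ``Lipschitz in the measure'' piece bounded by $\gamma\,W_1(\rho[f_t],\rho[\bar f_t])\leq\gamma\,d(f_t,\bar f_t)\leq\gamma\,(\dE(|x_t|^2+|v_t|^2))^{1/2}$. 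The paper instead rewrites the expectation of this term as a double integral against $\pi_t\otimes\pi_t$ (with $\pi_t=\mathrm{Law}(X_t,V_t;\bar X_t,\bar V_t)$) and uses the \emph{oddness} of $C$ to symmetrize, obtaining directly $-\!\int\!\big((Y-y)-(\bar Y-\bar y)\big)\cdot\big(C(Y-y)-C(\bar Y-\bar y)\big)\,d\pi_t\,d\pi_t\leq 2\gamma\,\dE|x_t|^2$, and a similar bound for the $v$-term. Your triangle-inequality route works for any Lipschitz $C$ and does not use the oddness; the paper's symmetrization does use it and yields somewhat cleaner constants. Both close the Gr\"onwall argument and give the proposition; the remaining optimization over $(a,b)$ (the paper fixes $a=b\beta$ and then tunes $b,\varepsilon$; you leave three parameters free, which is equivalent up to scaling) and the smallness threshold on $\gamma+\delta$ proceed identically.
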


\begin{proof}[Proof of Proposition~\ref{propcveq}]
 Let $(f_t)_{t \geq 0}$ and $(\bar{f}_t)_{t \geq 0}$ be two solutions
 to~\eqref{eq:vfp} with initial data $f_0$ and $\bar{f}_0$ in
 $\mathcal P_2.$ Let also $(X_0, V_0)$ and $(\bar{X}_0, \bar{V}_0)$
 with respectively law $f_0$ and $\bar{f}_0$, evolving into $(X_t,
 V_t)$ and $(\bar{X}_t, \bar{V}_t)$ according to~\eqref{edsVFP}, both
 with the same Brownian motion $(W_t)_{t \geq 0}$ in $\rr^d.$ Then,
 by difference, $(x_t, v_t) = (X_t - \bar{X}_t, V_t - \bar{V}_t) $
 evolves according to
$$
\left\{ \begin{array}{rcl}
   dx_t    & = &  v_t \, dt \\
   dv_t & = & - \big( A(V_t) - A(\bar{V}_t)+ \beta \, x_t + D(X_t) -
   D(\bar{X}_t) \big) \, dt - \big( C *_x \rho[f_t](X_t) - C *_x
   \rho[\bar{f}_t](\bar{X}_t) \big) \, dt.
                         \end{array}
                       \right.
$$

Then, if $a$ and $b$ are positive constants to be chosen later on, 
\begin{eqnarray*}
 \frac{d}{dt} ( a \vert x_t \vert^2 + 2 \, x_t \cdot v_t + b \vert v_t \vert^2 ) 
 & = &
 2 \, a  \, x_t \cdot  v_t + 2 \vert v_t \vert^2  \\
 & &- 2 \, x_t \cdot \big( A(V_t) - A(\bar{V}_t) + \beta x_t + D(X_t) - D(\bar{X}_t)  \big) \\
 & & - 2 \, b \, v_t \cdot \big( A(V_t) - A(\bar{V}_t)  + \beta x_t  + D(X_t) - D(\bar{X}_t) \big)\\
 & & -  2 \, (x_t + b \, v_t) \cdot \big( C *_x \rho[f_t](X_t) - C *_x \rho[\bar{f}_t](\bar{X}_t) \big).
\end{eqnarray*}

By the Cauchy-Schwarz inequality and assumptions on $A$ and $D$, the
first four terms are bounded by above by
$$
2 (a - b \beta) x_t \cdot v_t + 2 (\alpha + b \delta) \vert x_t \vert \, \vert v_t \vert - 2 \, (b \alpha' -1) \vert v_t \vert^2 - 2 \, (\beta - \delta) \, \vert x_t \vert^2.
$$

\medskip

Let now $\pi_t$ be the law of $(X_t, V_t; \bar{X}_t, \bar{V_t})$ on
$\rr^{2d} \times \rr^{2d}:$ then its marginals on $\rr^{2d}$ are the
respective distributions $f_t$ and $\bar{f}_t$ of $(X_t, V_t)$ and
$(\bar{X}_t, \bar{V}_t)$, so that, since moreover $C$ is odd:
\begin{eqnarray*}
& &- 2 \, \ee \, x_t \cdot \big( C *_x \rho[f_t](X_t) - C *_x \rho[\bar{f}_t](\bar{X}_t) \big)\\
& = & - 2  \int_{\rr^{8d}} (Y- \bar{Y}) \cdot \big(C(Y-y) - C(\bar{Y} - \bar{y}) \big) \, d\pi_t(y,w; \bar{y}, \bar{w}) \, d\pi_t(Y,W; \bar{Y}, \bar{W})\\
& = & - \int_{\rr^{8d}} \big( (Y-y) - (\bar{Y} - \bar{y}) \big) \cdot \big( C(Y-y) - C(\bar{Y} - \bar{y}) \big) \, d\pi_t(y,w; \bar{y}, \bar{w}) \, d\pi_t(Y,W; \bar{Y}, \bar{W})\\
& \leq & \gamma  \int_{\rr^{8d}} \big\vert (Y-y) - (\bar{Y} - \bar{y}) \big\vert^2 \, d\pi_t(y,w; \bar{y}, \bar{w}) \, d\pi_t(Y,W; \bar{Y}, \bar{W})\\
& = & 2  \, \gamma \Big[ \int_{\rr^{4d}} \vert y - \bar{y} \vert^2 d\pi_t(y,w; \bar{y}, \bar{w}) - \Big\vert \int_{\rr^{4d}} ( y - \bar{y} ) d\pi_t(y,w; \bar{y}, \bar{w}) \Big\vert^2 \Big]\\
& \leq &  2 \, \gamma \, \ee \vert x_t \vert^2.
\end{eqnarray*}

In the same way, and by Young's inequality,
\begin{eqnarray*}
&& -2 \,  \ee v_t \cdot \big( C *_x \rho[f_t](X_t) - C *_x \rho[\bar{f}_t](\bar{X}_t) \big)\\
& = & 
- 2 \int_{\rr^{8d}} (W- \bar{W}) \cdot \big(C(Y-y) - C(\bar{Y} - \bar{y}) \big) \, d\pi_t(y,w; \bar{y}, \bar{w}) \, d\pi_t(Y,W; \bar{Y}, \bar{W})\\
& = & 
-  \int_{\rr^{8d}} \big( (W-\bar{W}) - (w - \bar{w}) \big) \cdot \big( C(Y-y) - C(\bar{Y} - \bar{y}) \big) \, d\pi_t(y,w; \bar{y}, \bar{w}) \, d\pi_t(Y,W; \bar{Y}, \bar{W})\\
& \leq & 
\frac{\gamma}{2}  \int_{\rr^{8d}}   \big\vert (W - \bar{W}) - (w - \bar{w}) \big\vert^2 +  \big\vert (Y-\bar{Y} - (y-\bar{y}) \big\vert^2\, d\pi_t(y,w; \bar{y}, \bar{w}) \, d\pi_t(Y,W; \bar{Y}, \bar{W})\\
& \leq & 
\gamma \, \ee [\vert x_t \vert^2 + \vert v_t \vert^2].
\end{eqnarray*}

Collecting all terms leads to the bound
\begin{eqnarray*}
\frac{d}{dt} \ee \, \big( a \vert x_t \vert^2 + 2 \, x_t \cdot v_t + b \vert v_t \vert^2 )
& \leq &
2 (a-b \beta) \, \ee \, x_t \cdot v_t + 2 (\alpha + b \delta) \, \ee \vert x_t \vert \, \vert v_t \vert \\
& & - 2 \Big(\beta - \delta - \gamma - \frac{\gamma \, b}{2} \Big) \ee \vert x_t \vert^2 - 2 \Big( \alpha' b - 1 - \frac{\gamma \, b}{2} \Big) \ee \vert v_t \vert^2 
\end{eqnarray*}
for every positive $\varepsilon$, and then (with $a = b \beta$) to
$$
\frac{d}{dt} \ee \, \big( b \beta \, \vert x_t \vert^2 + 2 \, x_t \cdot v_t + b \vert v_t \vert^2 ) 
\leq -  \Big(2 \beta - 2 \eta - \varepsilon - \eta b \Big) \ee \vert x_t \vert^2 -  \Big((2 \alpha' -\eta) b - 2 - \frac{\alpha^2}{\varepsilon}\Big) \ee \vert v_t \vert^2 
$$
by Young's inequality, where $\eta = \gamma + \delta$.

If $4 - 4 \beta b^2 <0$, that is, if $b > 1/\sqrt{\beta}$, then $Q:
(x,v) \mapsto b \beta \vert x \vert^2 + 2 x \cdot v + b \vert v
\vert^2$ is a positive quadratic form on $\rr^{2d}.$ Then we look for
$b$ and $\varepsilon$ such that
\begin{equation}
 \label{eq:cond1}
2 \beta - 2 \eta - \varepsilon - \eta b  >0 \quad \textrm{and}  \quad  (2 \alpha' - \eta) b - 2 - \frac{\alpha^2}{\varepsilon}  >0, 
\end{equation}
in such a way that
$$
\frac{d}{dt} \ee \, Q(x_t, v_t) \leq - C \, \ee \big[ \vert x_t
\vert^2 + \vert v_t \vert^2 \big]
$$
holds for a positive constant $C.$

Necessarily $\eta < 2 \alpha'$, which is assumed in the sequel. Then,
for instance for $\varepsilon = \beta,$ the conditions
\eqref{eq:cond1} are equivalent to
$$
\frac{2+ \alpha^2/\beta}{2 \alpha' - \eta} < b < \frac{\beta - 2
 \eta}{\eta}, \quad \eta < 2 \alpha'.
$$

We look for $\eta$ such that $\ds \frac{2+ \alpha^2/\beta}{2 \alpha' -
 \eta} < \frac{\beta - 2 \eta}{\eta},$ that is, $\ds 2 \eta^2 - \eta
( 2 + \frac{\alpha^2}{\beta} + \beta + 4 \alpha') + 2 \alpha' \beta
0.$ This polynomial takes negative values at $\eta = 2 \alpha'$, so
it is positive on an interval $[0, \eta_0[$ for some $\eta_0 < 2
\alpha'.$ We further notice that $\ds \eta_0 < \frac{\beta
 \sqrt{\beta}}{1 + 2 \sqrt{\beta}},$ so that there exists $b$ with
all the above conditions for any $0 \leq \eta < \eta_0.$

Hence there exists a constant $\eta_0$, depending only on $\alpha,
\alpha'$ and $\beta$, such that, if $\gamma + \delta < \eta_0$, then
there exist a positive quadratic form $Q$ on $\rr^{2d}$ and a constant
$C$, depending only on $\alpha, \alpha', \beta, \gamma$ and $\delta$
such that
$$
\frac{d}{dt} \ee \, Q(x_t, v_t) \leq - C \, \ee \big[ \vert x_t \vert^2 + \vert v_t \vert^2 \big] 
$$
for all $t\geq 0$. In turn, since $Q(x,v)$ and $\vert x \vert^2 +
\vert v \vert^2$ are equivalent on $\rr^{2d}$, this is bounded by $ -
C \, \ee \, Q(x_t, v_t)$ for a new constant $C$, so that
$$
\ee \, Q \big((X_t, V_t) - (\bar{X}_t, \bar{V}_t) \big) \leq e^{-Ct}
\, \ee \, Q \big( (X_0, V_0) - (\bar{X}_0, \bar{V}_0) \big)
$$
for all $t\geq 0$ by integration. We finally optimize over $(X_0, V_0)$ and
$(\bar{X}_0, \bar{V}_0)$ with respective laws $f_0$ and $\bar{f}_0$
and use the relation $d_Q(f_t, \bar{f}_t) \leq \ee \, Q((X_t, V_t) -
(\bar{X}_t, \bar{V}_t))$ to deduce
$$
d_Q(f_t, \bar{f}_t) \leq e^{-Ct} \, d_Q(f_0, \bar{f}_0).
$$
This concludes the argument. 
\end{proof}

\begin{rem}
This coupling argument can also be performed for the (space homogeneous) granular media equation, for which it exactly recovers the contraction property in Wasserstein distance given in \cite[Theorem 5]{CMV2}, whence the statements which follow on the trend to equilibrium. 
\end{rem}

We now turn to the 

\begin{proof}[Proof of Theorem \ref{theocveq}]
First of all, the positive quadratic form $Q(x,v)$ on $\rr^{2d}$ given
by Proposition~\ref{propcveq} is equivalent to $\vert x \vert^2 +
\vert v \vert^2$, so there exist positive constants $C''$ and $C'$
such that
$$
d(f_t, \bar{f}_t) \leq C'' d_Q(f_t, \bar{f}_t) \leq C'' e^{-Ct} d_Q(f_0, \bar{f}_0) \leq  C' \, e^{-C t} \, d(f_0,\bar{f}_0), \qquad t \geq 0
$$
for all solutions $(f_t)_{t \geq 0}$ and $(\bar{f}_t)_{t \geq 0}$
to~\eqref{eq:vfp} by the contraction property of
Proposition~\ref{propcveq}: this proves the first
assertion~\eqref{pseudocontr} of Theorem~\ref{theocveq}.

\medskip

Now, if $Q$ is the positive quadratic form on $\rr^{2d}$ given by
Proposition~\ref{propcveq}, then $\sqrt{Q}$ is a norm on $\rr^{2d}$ so
that the space $({\mathcal P}_2, d_Q)$ is a complete metric space
(see~\cite{bolley-strasbourg} or~\cite[Chapter 6]{villani-stflour} for
instance).

Then Lemma~\ref{lemmacveq} below (see~\cite[Lemma
7.3]{carrillotoscani-portoercole} for instance) and the contraction
property of Proposition~\ref{propcveq} ensure the existence of a
unique stationary solution $\mu_{\infty}$ in ${\mathcal P}_2$
to~\eqref{eq:vfp}:

\begin{lemma}\label{lemmacveq}
 Let $(S, dist)$ be a complete metric space and $(T(t))_{t \geq 0}$
 be a continuous semigroup on $(S, dist)$ for which for all positive
 $t$ there exists $L(t) \in ]0,1[$ such that
$$
dist (T(t)(x), T(t)(y)) \leq L(t) \, dist (x,y)
$$
for all positive $t$ and $x,y$ in $S.$ Then there exists a unique
stationary point $x_{\infty}$ in $S$, that is, such that
$T(t)(x_{\infty}) = x_{\infty}$ for all positive $t.$
\end{lemma}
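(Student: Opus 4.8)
The plan is to reduce everything to the classical Banach fixed point theorem applied to a single time-$t$ map, and then to propagate the fixed point property to all times by using the semigroup law.

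First I would fix an arbitrary positive time, say $t_0 = 1$ (any positive value works equally well). By hypothesis $T(1)$ is an $L(1)$-Lipschitz self-map of $S$ with $L(1) \in \, ]0,1[$, that is, a strict contraction on the complete metric space $(S, dist)$; note in passing that the Lipschitz bound already makes each $T(t)$ continuous as a map $S \to S$, so the continuity hypothesis is essentially automatic here. By the Banach fixed point theorem there is then a unique $x_\infty \in S$ with $T(1)(x_\infty) = x_\infty$. If one wants to spell this out: starting from any $x \in S$, the iterates $x_n = T(1)^n(x)$ satisfy $dist(x_{n+1}, x_n) \leq L(1)^n \, dist(x_1, x)$, hence $(x_n)$ is Cauchy since $\sum_n L(1)^n < \infty$; its limit $x_\infty$ (which exists by completeness) is a fixed point of $T(1)$ by continuity of $T(1)$, and two fixed points $x, y$ would satisfy $dist(x,y) = dist(T(1)x, T(1)y) \leq L(1)\, dist(x,y)$, forcing $dist(x,y) = 0$.

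Next I would show that this $x_\infty$ is stationary for the whole semigroup. For any $s > 0$, the semigroup property gives $T(1)\big(T(s)(x_\infty)\big) = T(1+s)(x_\infty) = T(s)\big(T(1)(x_\infty)\big) = T(s)(x_\infty)$, so $T(s)(x_\infty)$ is again a fixed point of the contraction $T(1)$; by the uniqueness just established, $T(s)(x_\infty) = x_\infty$. Hence $T(t)(x_\infty) = x_\infty$ for all $t > 0$. Finally, uniqueness of the stationary point in all of $S$ is immediate: if $y$ satisfies $T(t)(y) = y$ for every $t > 0$, then in particular $T(1)(y) = y$, so $y$ is a fixed point of $T(1)$ and therefore $y = x_\infty$.

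The argument is routine; the only place where the hypotheses are used beyond completeness of $(S,dist)$ is the commutativity $T(1)\circ T(s) = T(s)\circ T(1) = T(1+s)$ built into the semigroup property, which is precisely what allows a fixed point of one time-map to be a fixed point of all of them. There is no real obstacle — the slight subtlety, if any, is simply to resist constructing $x_\infty$ as a limit $\lim_{t\to\infty} T(t)(x)$ directly (which would require extra care about continuity in $t$) and instead extract it from the discrete iteration $T(1)^n$, where only continuity of the single map $T(1)$ intervenes.
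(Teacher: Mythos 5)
Your proof is correct and complete: construct the unique fixed point $x_\infty$ of the strict contraction $T(1)$ via the Banach fixed point theorem, then note that for every $s>0$ the semigroup identity $T(1)\circ T(s)=T(1+s)=T(s)\circ T(1)$ makes $T(s)(x_\infty)$ a fixed point of $T(1)$, forcing $T(s)(x_\infty)=x_\infty$ by uniqueness, with global uniqueness of the stationary point following because any such point is in particular a fixed point of $T(1)$. The paper does not spell out a proof of this lemma (it merely refers to \cite[Lemma 7.3]{carrillotoscani-portoercole}), and your argument is exactly the standard one that the citation has in mind.
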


Moreover all solutions $(f_t)_{t \geq 0}$ with initial data $f_0$ in
${\mathcal P}_2$ converge to this stationary solution $\mu_{\infty},$
with
$$
d_Q(f_t, \mu_{\infty}) \leq  e^{-Ct} \, d_Q(f_0, \mu_{\infty}), \qquad t \geq 0.
$$

Finally, with $\bar{f_0} = \mu_{\infty}$, \eqref{pseudocontr} specifies into
$$
d(f_t, \mu_{\infty}) \leq C' e^{-C t} d(f_0,\mu_{\infty}), \qquad t
\geq 0
$$
which concludes the proof of Theorem~\ref{theocveq}. 
\end{proof}

%%%%%%%%%%%%%%%%%%%%%%%%%%%%%%%%%%%%%%%%%%%%%%%
%%%%%%%%%%%%%%%%%%%%%%%%%%%%%%%%%%%%%%%%%%%%%%%

\section{Particle approximation}\label{sect-prop}

The time-uniform propagation of chaos in Theorem \ref{th:chaos}
requires a time-uniform bound of the second moment of the solutions
to \eqref{eq:vfp}.

\begin{lemma}\label{2moment}
 Under Assumption $(\cA)$, for all positive $\alpha, \alpha'$ and
 $\beta$ there exists a positive constant $c$ such that $\ds \sup_{t
   \geq 0} \int_{\rr^{2d}} (\vert x \vert^2 + \vert v \vert^2) \,
 f_t(x,v) \, dx \, dv$ is finite for all solutions $(f_t)_{t \geq 0}$
 to~\eqref{eq:vfp} with initial datum $f_0$ in ${\mathcal P}_2$ and
 $\gamma, \delta$ in $[0, c).$
\end{lemma}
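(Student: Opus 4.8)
The plan is to \emph{deduce} the uniform second-moment bound from the contraction estimate of Theorem~\ref{theocveq}, which has already been established and whose proof does not rely on the present lemma (so there is no circularity). The starting observation is the elementary identity: for any $\mu$ in $\mathcal P_2$, $d(\mu,\delta_{0})^{2}=\int_{\rr^{2d}}(\vert x\vert^{2}+\vert v\vert^{2})\,d\mu(x,v)$, where $\delta_{0}$ denotes the Dirac mass at the origin of $\rr^{2d}$ (any coupling with a deterministic measure is optimal). Thus the quantity to bound uniformly in $t$ is precisely $d(f_{t},\delta_{0})^{2}$.

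Let $c$ be the constant provided by Theorem~\ref{theocveq} for the given $\alpha,\alpha',\beta$, and take $0\leq\gamma,\delta<c$. Then \eqref{eq:vfp} admits a unique stationary solution $\mu_{\infty}$ in $\mathcal P_2$, and $d(f_{t},\mu_{\infty})\leq C'e^{-Ct}d(f_{0},\mu_{\infty})\leq C'd(f_{0},\mu_{\infty})$ for every $t\geq0$. By the triangle inequality for the Wasserstein distance $d$,
$$
d(f_{t},\delta_{0})\leq d(f_{t},\mu_{\infty})+d(\mu_{\infty},\delta_{0})\leq C'd(f_{0},\mu_{\infty})+\Big(\int_{\rr^{2d}}(\vert x\vert^{2}+\vert v\vert^{2})\,d\mu_{\infty}\Big)^{1/2}.
$$
Since $f_{0}$ and $\mu_{\infty}$ both lie in $\mathcal P_2$, the right-hand side is finite and independent of $t$; squaring it yields the claimed bound, with a constant governed by the second moment of $f_{0}$ (through $d(f_{0},\mu_{\infty})^{2}\leq 2\int(\vert x\vert^{2}+\vert v\vert^{2})f_{0}+2\int(\vert x\vert^{2}+\vert v\vert^{2})d\mu_{\infty}$) and by the coefficients of the equation. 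In this route there is essentially no obstacle: everything reduces to Theorem~\ref{theocveq}, the above identity, and the triangle inequality.

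Alternatively, one can give a self-contained proof mirroring that of Proposition~\ref{propcveq}. Let $(X_{t},V_{t})$ solve \eqref{edsVFP} with law $f_{t}$ and apply It\^o's formula to $Q(X_{t},V_{t})=b\beta\vert X_{t}\vert^{2}+2\,X_{t}\cdot V_{t}+b\vert V_{t}\vert^{2}$. Taking expectations kills the martingale part and produces the It\^o correction coming from $\sqrt{2}\,dW_{t}$; Assumption $(\cA)$ is then used exactly as in Proposition~\ref{propcveq} for the terms involving $A$ and $B$ and for the interaction term (via the oddness of $C$ and the same symmetrization of the double integral), the only new feature being lower-order contributions $\vert A(0)\vert$ and $\vert D(0)\vert$, since $A$ and $D$ need not vanish at the origin, which are absorbed by Young's inequality. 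The homogeneous degree-two part is then identical to the one in Proposition~\ref{propcveq}, so for $b>1/\sqrt{\beta}$ and $\gamma+\delta$ below the same threshold one obtains an affine differential inequality $\frac{d}{dt}\ee\,Q(X_{t},V_{t})\leq -C\,\ee\,Q(X_{t},V_{t})+K$ with $C,K>0$; Gr\"onwall's lemma gives $\sup_{t\geq0}\ee\,Q(X_{t},V_{t})<\infty$, and the conclusion follows from the equivalence of $Q$ with the squared Euclidean norm. The only care needed here is the sign bookkeeping --- exactly condition~\eqref{eq:cond1} --- together with a routine localization argument to justify It\^o's formula given only the finite-time moment bounds supplied by \cite{meleard}.
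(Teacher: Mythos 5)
Your first argument --- reading the second moment as $d(f_t,\delta_0)^2$, then applying the triangle inequality together with the already-proved contraction of Theorem~\ref{theocveq} and the fact that $\mu_\infty\in\mathcal P_2$ --- is correct and genuinely different from what the paper does. The circularity check is also right: the proof of Proposition~\ref{propcveq} and Theorem~\ref{theocveq} only needs $f_t\in\mathcal P_2$ pointwise in $t$ (guaranteed by the well-posedness from \cite{meleard}), not a uniform-in-time moment bound, so the lemma is not used there. In fact your route gives a \emph{better} threshold: the $c$ in Theorem~\ref{theocveq}, rather than the smaller one the paper obtains in its direct computation, where the Young splitting of the $|A(0)+D(0)|$-terms effectively replaces $\alpha$ by $2\alpha$ in the admissibility condition \eqref{eq:cond1}. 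The trade-off is that your bound is less explicit, since it passes through $\int(|x|^2+|v|^2)\,d\mu_\infty$, which is not given in closed form; the paper's direct Lyapunov computation yields an explicit affine differential inequality $\frac{d}{dt}\int Q\,df_t\le C_1-C_3\int Q\,df_t$, and that explicit control (and the $Q$-machinery with the $2\alpha$ threshold) is exactly what gets recycled in the proof of Theorem~\ref{th:chaos}, which is presumably why the authors chose it.

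Your second, alternative argument is essentially the paper's own proof, correctly reproduced: the It\^o correction $2bd$, the symmetrization using oddness of $C$, the absorption of $|A(0)|,|D(0)|$ by Young, and Gr\"onwall on $\ee\,Q(X_t,V_t)$. The only caveat worth stating explicitly (and which you do flag) is that one should remember the effective threshold there is the one of \eqref{eq:cond1} with $\alpha$ replaced by $2\alpha$, not literally ``the same threshold''.
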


\begin{proof}[Proof of Lemma~\ref{2moment}]
Let $(f_t)_{t \geq 0}$ be a solution to~\eqref{eq:vfp} with initial
datum $f_0$ in ${\mathcal P}_2$, and let $a$ and $b$ be positive
numbers to be chosen later on. Then
\begin{multline*}
\frac{d}{dt} \int_{\rr^{2d}} \big( a \, \vert x \vert^2 + 2 \, x \cdot v + b \, \vert v \vert^2 \big) \, df_t(x,v) \\
= 2 b d + 2 \int_{\rr^{2d}} v \cdot (a x +  v) - (x + bv) \cdot \big(A(v) + B(x) + C *_x \rho[f_t](x) \big) \, df_t(x,v) 
\end{multline*}
where, by Young's inequality and assumption on $A,$ $B$ and $C,$
$$
- 2 \, x \cdot A(v) = -2 \, x \cdot \big(A(v) - A(0)\big) - 2 \, x \cdot A(0)  \leq 2 \, \alpha \, \vert x \vert  \, \vert v \vert - 2 x \cdot A(0), 
$$
$$
-2 \, x \cdot B(x) = -2 \,  x \cdot \big( \beta \, x + D(x) - D(0) + D(0) \big) \leq -(2 \beta - 2 \delta) \vert x \vert^2 - 2 \, x \cdot D(0),
$$
$$
-2 \, b\, v \cdot A(v) = - 2 \, b \, v \cdot (A(v) - A(0) + A(0) \big) \leq - 2 \, b \, \alpha' \vert v \vert^2 - 2 \, b \, v \cdot A(0),
$$
$$
-2 \, b \, v \cdot B(x) = -2 \, b \, v \cdot \big( \beta \, x + D(x) - D(0) + D(0) \big) \leq - 2 b \beta \, v \cdot x + 2 b \delta \,  \vert v \vert \, \vert x \vert - 2 b \, v \cdot D(0) ,
$$
\begin{eqnarray*}
- 2 \int_{\rr^{2d}} x \cdot C*_x \rho[f_t] (x) \, df_t(x,v)
&=& -  \int_{\rr^{4d}} (x-y) \cdot C(x-y) \, df_t(x,v) \, df_t(y,w) \\
& \leq & \gamma \int_{\rr^{4d}} \vert x-y \vert^2 \, df_t(x,v) \, df_t(y,w) \\
& \leq & 2 \, \gamma \int_{\rr^{2d}} \vert x \vert^2 \, df_t(x,v) 
\end{eqnarray*}
and
\begin{eqnarray*}
- 2 \, b \int_{\rr^{2d}} v \cdot C*_x \rho[f_t] (x) \, df_t(x,v) 
&=& - b \int_{\rr^{4d}} (v-w) \cdot C(x-y) \, df_t(x,v) \, df_t(y,w) \\
& \leq & b \, \gamma  \int_{\rr^{4d}} \vert v-w \vert\, \vert x-y \vert \,df_t(x,v) \,df_t(y,w)\\
& \leq & \frac{b \gamma}{2} \int_{\rr^{4d}} \big(\vert v- w \vert^2 +  \vert x-y \vert^2 \big) \, df_t(x,v) \, df_t(y,w)\\
& \leq & b \, \gamma \int_{\rr^{2d}} ( \vert x \vert^2 + \vert v \vert^2) df_t(x,v).
\end{eqnarray*}

Collecting all terms and using Young's inequality we obtain, with $a = \beta b$ and $\eta = \gamma + \delta$,
\begin{eqnarray*}
& &\frac{d}{dt} \int_{\rr^{2d}} (\beta \, b \, \vert x \vert^2 + 2 \, x \cdot v + b \, \vert v \vert^2) \, df_t(x,v) \\
& \leq & 
2 bd + (2 \alpha + 2 b \delta) \int \vert x  \vert \, \vert v \vert \, df_t(x,v) 
+\big[ b \gamma + 2 \gamma - 2 \beta +  2 \delta  \big] \int \vert x \vert^2 df_t(x,v)  \\
& & 
+ \big[ 2 + \gamma b - 2 \alpha'b \big] \int \vert v \vert^2 \, df_t(x,v) - 2 \big( A(0)+D(0) \big) \cdot \Big(\int x \, df_t(x,v) + b \int v \, df_t(x,v) \Big)\\
&  \leq &
2 bd + \Big( \frac{2}{\varepsilon} + \frac{b^2 \varepsilon}{2 \alpha^2} \Big) \vert A(0) + D(0) \vert^2  \\
& &
- \big[ 2 \beta - 2 \eta - \varepsilon - \eta b \big] \int \vert x \vert^2 \, df_t(x,v) - \Big[(2 \alpha' - \eta) b - 2 - \frac{4 \alpha^2}{\varepsilon} \Big] \int \vert v \vert^2 \, df_t(x,v)
\end{eqnarray*} 
for all positive $\varepsilon$.

Now, as in the proof of Proposition~\ref {propcveq}, with $\alpha$
replaced by $2 \alpha$, we get the existence of a positive constant
$\eta_0$, depending only on $\alpha, \alpha'$ and $\beta$, such that
for all $0 \leq \gamma + \delta < \eta_0$ there exist $b$ (and
$\varepsilon$) such that $Q(x,v) = \beta \, b \, \vert x \vert^2 + 2
\, x \cdot v + b \, \vert v \vert^2$ be a positive quadratic form on
$\rr^{2d}$ and such that
$$
\frac{d}{dt}  \int_{\rr^{2d}} Q(x,v) \, f_t(x,v) \, dx \, dv \leq C_1 - C_2 \int_{\rr^{2d}} (\vert x \vert^2 + \vert v \vert^2) \, f_t(x,v) \, dx \, dv \leq C_1 - C_3 \int_{\rr^{2d}} Q(x,v) \, f_t(x,v) \, dx \, dv
$$
for positive constants $C_i.$ It follows that
$$
\sup_{t \geq 0} \int_{\rr^{2d}} Q(x,v) \, f_t(x,v) \, dx \, dv < + \infty
$$
if initially $\ds \int_{\rr^{2d}} Q(x,v) \, f_0(x,v) \, dx \, dv < +
\infty$, that is,
$$
\sup_{t \geq 0} \int_{\rr^{2d}} ( \vert x \vert^2 + \vert v \vert^2) \, f_t(x,v) \, dx \, dv < + \infty
$$
if initially $f_0$ belongs to ${\mathcal P}_2.$ This concludes the
argument. 
\end{proof}

We now turn to the 

\begin{proof}[Proof of Theorem \ref{th:chaos}]
For each $1 \leq i \leq N$ the law $f_t$ of $(\bar{X}^i_t,
\bar{V}^i_t)$ is the solution to~\eqref{eq:vfp} with $f_0$ as initial
datum and the processes $(\bar{X}^{i}_t,\bar{V}^{i}_t)_{t \geq 0}$ and
${(X^{i,N}_t,V^{i,N}_t)}_{t\geq 0}$ are driven by the same Brownian
motion. In particular the differences $x_t^i = X^{i,N}_t -
\bar{X}^i_t$ and $v_t^i = V^{i,N}_t - \bar{V}^i_t$ evolve according to
$$
\left\{ \begin{array}{rcl}
             dx^i_t    & = &  v^i_t \, dt \\
            dv^i_t     & = &  - \big( A(V^{i,N}_t) - A(\bar{V}^i_t)+ \beta \, x^i_t + D(X^{i,N}_t) - D(\bar{X}^i_t) \big) \, dt 
            			 - \ds \frac{1}{N} \sum_{j=1}^N \big( C(X^{i,N}_t - X^{j,N}_t) - C *_x \rho[f_t](\bar{X}^i_t) \big) \, dt  
                         \end{array}
                 \right. 
$$
with $(x^i_0, v^i_0) = (0,0).$

Then, if $a$ and $b$ are positive constants to be chosen later on,
\begin{eqnarray*}
\frac{d}{dt} ( a \vert x^i_t \vert^2 + 2 \, x^i_t \cdot v^i_t + b \vert v^i_t \vert^2 ) 
& = &
2 \, a  \, x^i_t \cdot  v^i_t + 2 \vert v^i_t \vert^2  
- 2 \, x^i_t \cdot \big( A(V^{i,N}_t) - A(\bar{V}^i_t) + \beta x^i_t + D(X^{i,N}_t) - D(\bar{X}^i_t)  \big) \\
& & - 2 \, b \, v^i_t \cdot \big( A(V^{i,N}_t) - A(\bar{V}^i_t)  + \beta x^i_t  + D(X^{i,N}_t) - D(\bar{X}^i_t) \big)\\
& & -  \frac{2}{N} \sum_{j=1}^N  (x^i_t + b \, v^i_t) \cdot \big( C(X^{i,N}_t - X^{j,N}_t) - C *_x \rho[f_t](\bar{X}^i_t) \big).
\end{eqnarray*}

By the Young inequality and assumptions on $A$ and $D$, for all
positive $\varepsilon$ the third and fourth terms are bounded by above
according to
$$
-2 \, x^i_t \cdot (A(V^{i,N}_t) - A(\bar{V}^i_t)) \leq 2 \vert x^i _t \vert \,  \vert A(V^{i,N}_t) - A(\bar{V}^i_t) \vert \leq 2 \, \alpha \, \vert x^i_t \vert \, \vert v^i_t \vert \leq \frac{\varepsilon}{2} \vert x^i _t \vert^2 + \frac{2 \alpha^2}{\varepsilon} \vert v^i_t \vert^2,
$$
$$
- 2 \, x^i_t \cdot (D(X^{i,N}_t) - D(\bar{X}^i_t)) = - 2 b (X^i_t - \bar{X}^i_t ) \cdot (D(X^{i,N}_t) - D(\bar{X}^i_t)) \leq 2 \, \delta \, \vert x^i_t \vert^2,
$$
$$
- 2 \, b \,  v^i_t \cdot (A(V^{i,N}_t) - A(\bar{V}^i_t)) = - 2 \, b  (V^{i,N}_t - \bar{V}^i_t ) \cdot (A(V^{i,N}_t) - A(\bar{V}^i_t)) \leq - 2 \, b \, \alpha \vert v^i _t \vert^2 
$$
and
$$
- 2 \, b \, v^i_t \cdot (D(X^{i,N}_t) - D(\bar{X}^i_t)) \leq 2 \, b \, \delta \vert v^i _t \vert \,  \vert x^i_t \vert \leq b \, \delta ( \vert x^i _t \vert^2 + \vert v^i _t \vert^2).
$$

Hence, with $a = \beta \, b$,
\begin{eqnarray*}
\frac{d}{dt}( \beta \, b \vert x^i_t \vert^2 + 2 \, x^i_t \cdot v^i_t + b \vert v^i_t \vert^2 )
&\leq& 
\big( \frac{\varepsilon}{2} - 2 \beta + 2 \delta + \delta b \big) \vert x^i_t \vert^2 + \big( 2 + \frac{2 \alpha^2}{\varepsilon} - 2 \alpha' b + \delta b \big) \vert v^i_t \vert^2 \\
& & - \frac{2}{N} \sum_{j=1}^N  (x^i_t + b \, v^i_t) \cdot \big( C(X^{i,N}_t - X^{j,N}_t) - C *_x \rho[f_t](\bar{X}^i_t) \big).
\end{eqnarray*}

Moreover, by symmetry, $\ee \vert x^i_t \vert^2$, $\ee x^i_t \cdot
v^i_t$, ... are independent of $i=1, \dots, N$, so that, by averaging
on $i$,
\begin{eqnarray}
\frac{d}{dt} \ee \big[ \beta \, b \vert x^1_t \vert^2 + 2 \, x^1_t \cdot v^1_t + b \vert v^1_t \vert^2 \big] 
&\leq  &
- \big( 2 \beta - 2 \delta - \frac{\varepsilon}{2} - \delta b \big)  \ee \vert x^1_t \vert^2 - \big( (2 \alpha' - \delta) b - 2 - \frac{2 \alpha^2}{\varepsilon} \big) \ee \vert v^1_t \vert^2 \nonumber\\
& & - \frac{2}{N^2} \! \sum_{i,j=1}^N  \ee \Big[ (x^i_t + b \, v^i_t) \cdot \big( C(X^{i,N}_t  - X^{j,N}_t) - C *_x \rho[f_t](\bar{X}^i_t) \big) \Big]. \qquad \label{lastterm}
\end{eqnarray}

\medskip

We decompose the last term in~\eqref{lastterm} according to
$$
C(X^{i,N}_t - X^{j,N}_t) - C *_x \rho[f_t](\bar{X}^i_t)  = C(X^{i,N}_t - X^{j,N}_t) - C(\bar{X}^i_t - \bar{X}^j_t) + C(\bar{X}^i_t - \bar{X}^j_t) - C *_x \rho[f_t](\bar{X}^i_t)
$$
which leads to estimating four terms:

{\bf 1.} By symmetry and assumption on $C$,
\begin{eqnarray*}
- \! \sum_{i,j=1}^N \! \ee  \Big[ x^i_t \cdot \big( C(X^{i,N}_t - X^{j,N}_t) - C(\bar{X}^i_t - \bar{X}^j_t) \big) \Big]
\! \! \! \! & = &
- \sum_{i,j=1}^N \ee \, \Big[ (X^{i,N}_t - \bar{X}^i_t)   \cdot  \big( C(X^{i,N}_t - X^{j,N}_t) - C(\bar{X}^i_t - \bar{X}^j_t) \big) \Big] \\
& = &
\! \! - \frac{1}{2} \sum_{i,j=1}^N \ee \, \Big[  \big((X^{i,N}_t - X^{j,N}_t) - (\bar{X}^i_t - \bar{X}^j_t) \big) \\
& &  \qquad \qquad \qquad \cdot \;  \big( C(X^{i,N}_t - X^{j,N}_t) - C(\bar{X}^i_t - \bar{X}^j_t) \big) \Big] 
\\
& \leq & 
\frac{\gamma}{2} \sum_{i,j=1}^N \ee \,  \big\vert (X^{i,N}_t - X^{j,N}_t) - (\bar{X}^i_t - \bar{X}^j_t) \big\vert^2 \\
& = &
\frac{\gamma}{2} \sum_{i,j=1}^N \ee \,  \big\vert (X^{i,N}_t - \bar{X}^i_t) - (\bar{X}^{j,N}_t - \bar{X}^j_t) \big\vert^2 \\
& = &
\gamma \sum_{i,j=1}^N \ee \vert x^i_t \vert^2 - \gamma \ee \Big\vert \sum_{i=1}^N (X^{i,N}_t - \bar{X}^i_t) \Big\vert^2\\
& \leq &
\gamma N^2 \vert x^1_t \vert^2.
\end{eqnarray*}

{\bf 2.} By assumption on $C$ and the Young inequality,
\begin{eqnarray*}
- \sum_{i,j=1}^N \ee \, v^i_t \cdot \big( C(X^{i,N}_t - X^{j,N}_t) - C(\bar{X}^i_t - \bar{X}^j_t) \big)
& = &
- \frac{1}{2} \sum_{i,j=1}^N \ee \big[ (v^i_t - v^j_t)  \cdot \big( C(X^{i,N}_t - X^{j,N}_t) - C(\bar{X}^i_t - \bar{X}^j_t) \big) \big]\\
& \leq & 
\frac{\gamma}{2} \sum_{i,j=1}^N \ee \big[ \vert v^i _t - v^j_t \vert \,   \big\vert (X^{i,N}_t - X^{j,N}_t) - (\bar{X}^i_t - \bar{X}^j_t) \big\vert  \big] \\
& \leq &
\frac{\gamma}{2} \sum_{i,j=1}^N \ee \, \Big[ \frac{1}{2} \vert v^i_t - v^j_t \vert^2 + \frac{1}{2}  \big\vert (X^{i,N}_t - \bar{X}^i_t) - (X^{j,N}_t - \bar{X}^j_t) \big\vert^2 \Big] \\
& \leq &
\frac{\gamma}{2} N^2  \, \ee ( \vert v^1_t \vert^2 +  \vert x^1_t \vert^2).
\end{eqnarray*}

{\bf 3.} For each $i=1, \dots, N$, and again by the Young inequality
$$
- 2 \, \ee \, \Big[ x^i_t \cdot \sum_{j=1}^N \Big( C(\bar{X}^i_t - \bar{X}^j_t) - C *_x \rho[f_t](\bar{X}^i_t) \Big) \Big]  \leq L N \ee \, \vert x^i_t \vert^2 + \frac{1}{L N} \ee  \Big\vert \sum_{j=1}^N \Big( C(\bar{X}^i_t - \bar{X}^j_t) - C *_x \rho[f_t](\bar{X}^i_t) \Big) \Big\vert^2
$$
for any positive constant $L,$ where the last expectation is
$$
\sum_{j=1}^N \ee \big\vert C(\bar{X}^i_t - \bar{X}^j_t) - C *_x \rho[f_t](\bar{X}^i_t) \big\vert^2 + \sum_{j \neq k} \ee \Big[ \big(C(\bar{X}^i_t - \bar{X}^j_t) - C *_x \rho[f_t](\bar{X}^i_t) \big) \cdot \big( C(\bar{X}^i_t - \bar{X}^k_t) - C *_x \rho[f_t](\bar{X}^i_t) \big) \Big].
$$

First of all, $C$ is odd, so $C(0)=0$ and hence $ \vert C(z) \vert
\leq \gamma \, \vert z \vert.$ Then, for each $j= 1, \dots, N,$
\begin{eqnarray*}
\ee \big\vert C(\bar{X}^i_t - \bar{X}^j_t) - C *_x \rho[f_t](\bar{X}^i_t) \big\vert^2
& \leq &
2 \, \ee \big\vert C(\bar{X}^i_t - \bar{X}^j_t) \big\vert^2 + 2 \, \ee \big\vert C *_x \rho[f_t](\bar{X}^i_t) \big\vert^2\\
& \leq & 2 \, \gamma^2 \Big[ \ee \big\vert \bar{X}^i_t - \bar{X}^j_t \big\vert^2 + \ds \int_{\rr^{4d}} \vert y - x \vert^2 f_t(x,v) \, f_t(y,w) \, dx \, dv \, dy \, dw \Big]\\
& \leq & 8 \, \gamma^2 \, \int_{\rr^{2d}} \vert x \vert^2 \, f_t(x,v) \, dx \, dv\\
& \leq & M
\end{eqnarray*}
for a constant $M$, provided $\gamma$ and $\delta$ are small enough for the conclusion of Lemma \ref{2moment} to
hold. The constant $M$ depends on the initial moment $\ds  \int_{\rr^{2d}}  \big( \vert x \vert^2 + \vert v \vert^2 \big) \, f_0(x,v) \, dx \, dv$ and the coefficients of the equation, but not on $t$ or $N.$

Then, for all $j \neq k$, 
\begin{multline*}
\ee \Big[ \big(C(\bar{X}^i_t - \bar{X}^j_t) - C *_x \rho[f_t](\bar{X}^i_t) \big) \cdot \big( C(\bar{X}^i_t - \bar{X}^k_t) - C *_x \rho[f_t](\bar{X}^i_t) \big) \Big] \\
\begin{array}{rl}
&= \ee_{\bar{X}^i_t} \Big[ \Big (\ee_{\bar{X}^j_t} \big[C(\bar{X}^i_t - \bar{X}^j_t) - C *_x \rho[f_t](\bar{X}^i_t) \big] \Big) \cdot \Big( \ee_{\bar{X}^k_t} \big[C(\bar{X}^i_t - \bar{X}^k_t) - C *_x \rho[f_t](\bar{X}^i_t) \big] \Big) \Big] \\
&= \ee_{\bar{X}^i_t} \big[ 0 \big] =0
\end{array}
\end{multline*}
since $\bar{X}_t^j$ and $\bar{X}_t^k$ are independent and have law $\rho[f_t]$.

To sum up,
$$
- 2 \sum_{i,j=1}^N  \ee \, \Big[ x^i_t \cdot \big( C(\bar{X}^i_t - \bar{X}^j_t) - C *_x \rho[f_t](\bar{X}^i_t) \big) \Big] \leq L \, N^2 \, \ee \vert x_t^1 \vert^2 + \frac{M}{L} N.
$$

{\bf 4.} In the same way for any positive $L'$ we obtain the bound
$$
- 2 \sum_{i,j=1}^N  \ee \, \Big[ v^i_t \cdot \big( C(\bar{X}^i_t - \bar{X}^j_t) - C *_x \rho[f_t](\bar{X}^i_t) \big) \Big] \leq L' \, N^2 \, \ee \vert v_t^1 \vert^2 + \frac{M}{L'} N.
$$

\medskip

Collecting all terms and letting for instance $\ds L =
\frac{\varepsilon}{2}$ and $\ds L' = \frac{2 \alpha^2}{b \varepsilon}$,
it follows from~\eqref{lastterm} that there exists a positive constant
$c$ such that for all $\gamma, \delta$ in $[0, c)$ there exists a
constant $M$ such that
\begin{multline*}
\frac{d}{dt} \ee \big[ \beta \, b \vert x^1_t \vert^2 + 2 \, x^1_t \cdot v^1_t + b \vert v^1_t \vert^2 \big] \\
\leq
- (2 \beta - \varepsilon - 2 \eta  - \eta b) \ee \vert x^1_t \vert^2 - \big( (2 \alpha' - \eta) b  - 2 - \frac{4 \alpha^2}{\varepsilon}  \big) \ee \vert v^1_t \vert^2  +  \frac{M}{N} \Big( \frac{2}{\varepsilon} + \frac{\varepsilon b^2}{2 \alpha^2} \Big) \cdot
\end{multline*}
for all positive $t, b$ and $\varepsilon,$ where $\eta = \gamma + \delta$.

Now, as in the proof of Proposition~\ref {propcveq}, with $\alpha$
replaced by $2 \alpha$, we get the existence of a positive constant
$\eta_0$, depending only on $\alpha, \alpha'$ and $\beta$, such that
for all $0 \leq \gamma + \delta < \eta_0$ there exist $b$ (and
$\varepsilon$) such that $Q(x,v) = \beta \, b \, \vert x \vert^2 + 2
\, x \cdot v + b \, \vert v \vert^2$ be a positive quadratic form on
$\rr^{2d}$ and such that
$$
\frac{d}{dt} \ee \, Q(x_t^1, v_t^1) \leq - C_1 \, \ee \big[ \vert x_t^1 \vert^2 + \vert v_t^1 \vert^2 \big] + \frac{C_2}{N}
$$
for all $t \geq 0$ and for positive constants $C_1$ and $C_2$, also
depending on $f_0$ through its second moment, but not on $N.$ In turn
this is bounded by $- C_3 \, \ee \, Q(x_t^1, v_t^1) + \ds
\frac{C_2}{N}$, so that
$$
\sup_{t \geq 0} \, \ee \, Q(x_t^1, v_t^1) \leq \frac{C_4}{N}
$$
and finally
$$
\sup_{t \geq 0}  \ee \, \big[ \vert X_t^{1,N} - \bar{X}_t^1 \vert^2 + \vert V_t^{1,N} - \bar{V} _t^1 \vert^2 \big] \leq \frac{C}{N}
$$
where the constant $C$ depends on the parameters of the equation and
on the second moment of $f_0$, but not on $N.$ This concludes the
proof of Theorem~\ref{th:chaos}. 
\end{proof}

\begin{rem}
One can prove a contraction property for the particle system, similar to Proposition~\ref{propcveq} for the Vlasov-Fokker-Planck equation: if $f_0$ is an initial datum in ${\mathcal P}_2$  we let $f_t^{(1,N)}$ be the common law at time $t$ of any of the $N$ particles $(X^{i,N}_t, V^{i,N}_t).$ Then there exists a positive constant $c$ such that, if $0 \leq  \gamma, \delta < c$, then there exist a positive constant $C$ and a positive quadratic form $Q$ on $\rr^{2d}$ such that
$$
d_Q(f_t^{(1,N)}, {\tilde f}_t^{(1,N)}) \leq e^{-C t} \, d_Q(f_0^{(1,N)}, {\tilde f}_0^{(1,N)}) = e^{-C t} \, d_Q(f_0, {\tilde f}_0)
$$
for all $t$ and all initial data $f_0$ and ${\tilde f}_0$ in ${\mathcal P}_2$. Here the form $Q$ and the constants $c$ and $C$ depend only on the coefficients of the equations, and not on $N.$ 
From this and Remark~\ref{rem:cvmarg}, and following~\cite{CGM}, one can recover the contraction property of Proposition~\ref{propcveq}, whence Theorem~\ref{theocveq}.
\end{rem}

%{\FIX \bf FB}: un peu lourd ? (j'ai ecrit tous les details, il n'y a pas de trou)

%\begin{rem}
%One may relax the strict convexity assumption (${\cal A}$) as in \cite{CGM}, namely strict convexity with possible degeneracy at some points, leading to polynomial decay speed of convergence towards equilibrium and slower (still uniform in time) propagation of chaos. We left to the reader the mixing and the proofs therein and ours here. {\FIX} {\bf FB}: je supprimerais, comme + haut ...
%\end{rem}

%%%%%%%%%%%%%%%
\section{Transportation inequality and deviation result}\label{sect-TI}

%{\FIX \bf FB}: {\it un peu repris la section, laiss\'e des morceaux entre pourcentages, \`a recup\'erer \'eventuellement. Vous pouvez relire en detail ?? (je n'ai pas du tout regarde les details de la preuve de T2) Aussi on ne voit pas trop ou apparait le fait que les particules sont toutes initialement en (x0, v0). Preciser ou ? Enfin trop de details a la fin ??}

This final section is devoted to the proof of Theorem~\ref{th:T2}. It is based on the idea, borrowed to~\cite{DGW}, of proving a $T_2$ transportation inequality not only for the law $f_T^{(N)}$ at time $T$, but for the whole trajectory up to time $T;$ this transportation inequality will be proved by means of stochastic calculus, a coupling argument, a clever formulation of the relative entropy of two trajectory laws and a change of metric as in the previous sections; it will imply the announced transportation inequality by projection at time~$T.$

We only sketch the proof, emphasizing the main steps and refering to the previous sections and to~\cite{DGW} for further details.

%It follows the line of the one of Djellout and al \cite{DGW} and uses the same calculations than the ones done in the two previous 
%sections. We will therefore only sketch the proof emphasizing the main steps.\\
%There will be two major tricks: the first one has been already used twice, i.e. change the metric to be able  to exhibit some drawback 
%force still affecting position and velocity (and after that Gronwall's lemma); the second one is that it is easier to prove a $T_2$ 
%inequality not only for the law at time $T$ but for the whole trajectory up to time $T$ when the underlying distance defining the 
%Wasserstein distance on the space of continuous function is the $L_2$ one. Stochastic calculus will then help us to exhibit an user 
%friendly coupling with a nice formula for the entropy of the two (trajectory) laws. Changing the metric and the previous calculation will %prove the $T_2$ inequality for the trajectory laws. Projection on time $T$ will finish the job, by a well known property of entropy.  \\
\medskip

We equip the space ${\mathcal C}$ of $\rr^{2dN}$-valued  continuous functions on $[0,T]$ with the $L^2$ norm
%, i.e. for continuous $\gamma_1$ and $\gamma_2$
%$$d_{L^2}(\gamma_1,\gamma_2)=\sqrt{\int_0^T|\gamma_1(t)-\gamma(t)|^2dt}$$
and consider the space ${\mathcal P} (\mathcal C)$ of Borel probability measures on $\mathcal C$, equipped with the Wasserstein distance defined by the cost $\Vert \gamma_1 - \gamma_2 \Vert_{L^2}^2$ for $ \gamma_1,  \gamma_2 \in \mathcal C.$

%and the Wasserstein distance induced by this distance $d^L_2$. 

We write Equation~\eqref{eq:sdesys} on the particle system $(X_t^{(N)},V^{(N)}_t)_{t \geq 0}$ in the form
$$d(X_t^{(N)},V^{(N)}_t)=\sigma^{(N)}(X_t^{(N)},V^{(N)}_t)dW^{(N)}_t+b^{(N)}(X_t^{(N)},V^{(N)}_t)dt$$
for some coefficients $\sigma^{(N)}$ and $b^{(N)}.$  

\smallskip

Let $\dP \in {\mathcal P} (\mathcal C)$ be the law of the trajectory $(X^{(N)},V^{(N)}) = (X_t^{(N)},V^{(N)}_t)_{0 \leq t \leq T}$ of the particles, all of them starting from the deterministic point $(x_0, v_0) \in \rr^{2d}.$ 
%In the sequel we will drop the dependence on the initial conditions.

The transportation inequality for $\dP$, which it is sufficient to prove for laws $\dQ$ absolutely continuous with respect to $\dP,$ will obtained in two steps.\\

{\it Step 1.} Following~\cite[Proof of Theorem 5.6]{DGW}, for every trajectory law $\dQ \in {\mathcal P} (\mathcal C)$, there exists $(\beta_t)_{t\in[0,T]}\in L^2([0,T],R^{2dN})$ such that $H(\dQ,\dP)=\frac12\dE^\dQ\int_0^T|\beta_t|^2dt$; moreover
$$
d(X_t^{(N)},V^{(N)}_t)=\sigma^{(N)}(X_t^{(N)},V^{(N)}_t)d\tilde W_t^{(N)}+b^{(N)}(X_t^{(N)},V^{(N)}_t)dt+\sigma^{(N)}(X_t^{(N)},V^{(N)}_t)\beta_tdt
$$
under the law $\dQ$,  where $\tilde W_t^{(N)}=W_t^{(N)}-\ds \int_0^t\beta_sds$ is a Brownian motion under $\dQ$. We now build a coupling between $\dQ$ and $\dP$ by letting $(\tilde X^{(N)},\tilde V^{(N)}) = (\tilde X_t^{(N)},\tilde V^{(N)}_t)_{0 \leq t \leq T}$ be the solution (under $\dQ$) of 
$$
d(\tilde X_t^{(N)},\tilde V^{(N)}_t)=\sigma^{(N)}(\tilde X_t^{(N)},\tilde V^{(N)}_t)d\tilde W_t^{(N)}+b^{(N)}(\tilde X_t^{(N)},\tilde V^{(N)}_t)dt,
$$
whose law under $\dQ$ is exactly $\dP$.\\

{\it Step 2.} In order to prove the $T_2$ inequality, and as in the previous sections, we change the metric induced on $\mathcal C$ by the $L^2$  norm and consider an equivalent positive quadratic form $Q(x,v) = a \vert x \vert^2 + 2 \, x \cdot v + b \vert v \vert^2.$ We control the quantity
$$
\dE^\dQ Q\big( (X_t^{(N)},V^{(N)}_t)-(\tilde X_t^{(N)},\tilde V^{(N)}_t) \big)
$$
by proving the existence of a positive constant $D$, independent of $N$ and $T$, such that
$$
\dE^\dQ \big[a|x_t|^2+2 x_t \cdot v_t+b|v_t|^2 \big]\le \! -D \!  \int_0^t\dE^\dQ \big[ |x_s|^2+|v_s|^2 \big] ds \, +\int_0^t\dE^\dQ \big[ \nabla Q(x_s,v_s) \cdot \sigma^{(N)}(X_s^{(N)},V^{(N)}_s) \beta_s \big] ds
$$
in the notation $x_t=X_t^{(N)}-\tilde X_t^{(N)}$ and $v_t=V^{(N)}_t-\tilde V^{(N)}_t.$
Then we bound the last term  by
$$
\varepsilon\int_0^t\dE^\dQ \big[|x_s|^2+|v_s|^2 \big] ds+{1\over \varepsilon}\int_0^t\dE^\dQ|\beta_s|^2ds
$$
and the transportation inequality for the trajectory law $\dP$ follows again by Gronwall's lemma, with a new constant $D$ independent of $T.$

The transportation inequality for the law $f_T^{(N)}$ at time $T$ finally follows by projection at time $T.$

\bigskip

We now turn to the deviation inequality in Theorem~\ref{th:T2}. First of all, if $h$ is a $1$-Lipschitz function on $\rr^{2d},$ then
\begin{multline*}
\ds \frac1N\sum_{i=1}^Nh(X^{i,N}_T,V^{i,N}_T)-   \int_{\rr^{2dN}} \frac1N\sum_{i=1}^N h(x_i,v_i) \; df_T^{(N)} (x_1, \dots, v_N)\\
\begin{array}{rl}
&\ds = \frac1N\sum_{i=1}^Nh(X^{i,N}_T,V^{i,N}_T)- \int h d\mu_\infty+ \int h \, d\mu_\infty- \int h \, df_T+ \int h \, df_T- \int h \, df_T^{(1,N)} \\
& \ds  \ge \frac1N\sum_{i=1}^Nh(X^{i,N}_T,V^{i,N}_T)- \int h \, d\mu_\infty  - d(f_T,\mu_\infty) - d(f_T,f^{(1,N)}_T)
\end{array}
\end{multline*}
by exchangeability. But, by Theorem~ \ref{theocveq} with $f_0 = \delta_{(x_0, v_0)},$ there exist two constants $C$ and $C'$, depending only on the coefficients of the equation, such that
$$
d(f_T,\mu_\infty) \leq C' e^{-C T} \, d(f_0,\mu_\infty)
$$
where $f_T$ is the solution at time $T$ of Equation~\eqref{eq:vfp} with initial datum $f_0 = \delta_{(x_0, v_0)}.$ Moreover, by Remark~\ref{rem:cvmarg}, there exists a constant $C''$, depending only on the equation and on $(x_0, v_0)$, such that
$$
d(f_T,f^{(1,N)}_T) \leq \frac{C''}{\sqrt{N}}.
$$
Hence
\begin{multline*}
\dP \left[\frac1N\sum_{i=1}^Nh(X^{i,N}_T,V^{i,N}_T)- \int_{\rr^{2d}}  h\, d\mu_\infty \ge r + D'\left(\frac{1}{\sqrt{N}}+e^{-CT}\right)\right]
\\
\le
\dP \left[ \frac1N\sum_{i=1}^Nh(X^{i,N}_T,V^{i,N}_T) - \int_{\rr^{2dN}} \frac1N\sum_{i=1}^N h(x_i,v_i) \; df_T^{(N)} (x_1, \dots, v_N) \geq r \right]
\end{multline*}
where $D' = \max( C' d(f_0,\mu_\infty), C'')$ depends on $(x_0, v_0).$

Now the law $f_T^{(N)}$ satisfies a $T_2$ inequality on $\rr^{2dN}$ with constant $D$, hence a Gaussian deviation inequality for Lipschitz functions (see \cite{bobkov-gotze}); moreover the map $(x_1, \dots, v_N) \mapsto \ds  \frac1N\sum_{i=1}^N h(x_i,v_i)$ is $\ds \frac1{\sqrt{N}}$-Lipschitz on $\rr^{2dN}$, so the probability on the right-hand side is bounded by
$$
\exp\left(-\frac{Nr^2}{2D}\right)
$$
for all $r \geq 0.$
This concludes the proof of Theorem~\ref{th:T2}.

\footnotesize
%\bibliography{\jobname}
\nocite{*}
\bibliography{bgm}

\providecommand{\bysame}{\leavevmode\hbox to3em{\hrulefill}\thinspace}
\providecommand{\MR}{\relax\ifhmode\unskip\space\fi MR }
% \MRhref is called by the amsart/book/proc definition of \MR.
\providecommand{\MRhref}[2]{%
  \href{http://www.ams.org/mathscinet-getitem?mr=#1}{#2}
}
\providecommand{\href}[2]{#2}
\begin{thebibliography}{10}

\bibitem{logsob}
C.~An{\'e}, S.~Blach{\`e}re, D.~Chafa{\"\i}, P.~Foug{\`e}res, I.~Gentil,
  F.~Malrieu, C.~Roberto, and G.~Scheffer, \emph{Sur les in{\'e}galit{\'e}s de
  {S}obolev logarithmiques}, Panoramas et Synth{\`e}ses 10, Soci{\'e}t{\'e}
  {M}ath. de {F}rance, Paris, 2000.

\bibitem{BCG}
D.~Bakry, P.~Cattiaux, and A.~Guillin, \emph{Rate of convergence for ergodic
  continuous {M}arkov processes : Lyapunov versus {P}oincar\'e}, J. Funct.
  Anal. \textbf{254} (2008), no.~3, 727--759.

\bibitem{bakry-emery}
D.~Bakry and M.~{\'E}mery, \emph{Diffusions hypercontractives}, S\'em. de
  probabilit\'es XIX, 1983/84, Lecture Notes in Math. 1123 (1985).

\bibitem{BCCP}
D.~Benedetto, E.~Caglioti, J.~A. Carillo, and M.~Pulvirenti, \emph{A non
  {M}axwellian steady distribution for one-dimensional granular media}, J.
  Stat. Physics \textbf{91} (1998), no.~5/6, 979--990.

\bibitem{bgl}
S.G. Bobkov, I.~Gentil, and M.~Ledoux, \emph{Hypercontractivity of
  {H}amilton-{J}acobi equations}, J. Math. Pures Appli. \textbf{80} (2001),
  no.~7, 669--696.

\bibitem{bobkov-gotze}
S.G. Bobkov and F.~G{\"o}tze, \emph{Exponential integrability and
  transportation cost related to logarithmic {S}obolev inequalities}, J. Funct.
  Anal. \textbf{163} (1999), no.~1, 1--28.

\bibitem{bolley-strasbourg}
F.~Bolley, \emph{Separability and completeness for the {W}asserstein distance},
  S{\'e}m. de probabilit{\'e}s XLI, Lecture Notes in Math. 1934 (2008).

\bibitem{bolley-paths}
\bysame, \emph{Quantitative concentration inequalities on sample path space for
  mean field interaction}, Esaim Prob \& Stat., to appear (2009).

\bibitem{BGV}
F.~Bolley, A.~Guillin, and C.~Villani, \emph{Quantitative concentration
  inequalities for empirical measures on non compact spaces}, Prob. Theor. Rel.
  Fields \textbf{137} (2007), no.~3-4, 541--593.

\bibitem{bouchut-dolbeault95}
F.~Bouchut and J.~Dolbeault, \emph{On long time asymptotics of the
  {V}lasov-{F}okker-{P}lanck equation and of the
  {V}lasov-{P}oisson-{F}okker-{P}lanck system with {C}oulombic and {N}ewtonian
  potentials}, Diff. Int. Equations \textbf{8} (1995), no.~3, 487--514.

\bibitem{CMV}
J.~A. Carrillo, R.~J. McCann, and C.~Villani, \emph{Kinetic equilibration rates
  for granular media and related equations: entropy dissipation and mass
  transportation estimates}, Rev. Mat. Iberoamericana \textbf{19} (2003),
  no.~3, 971--1018.

\bibitem{CMV2}
\bysame, \emph{Contractions in the 2-{W}asserstein length space and
  thermalization of granular media}, Arch. Rat. Mech. Anal. \textbf{179}
  (2006), no.~2, 217--263.

\bibitem{carrillotoscani-portoercole}
J.~A. Carrillo and G.~Toscani, \emph{Contractive probability metrics and
  asymptotic behavior of dissipative kinetic equations}, Riv. Mat. Univ. Parma
  (7) \textbf{6} (2007), 75--198.

\bibitem{CGM}
P.~Cattiaux, A.~Guillin, and F.~Malrieu, \emph{Probabilistic approach for
  granular media equations in the non uniformly convex case}, Prob. Theor. Rel.
  Fields \textbf{140} (2008), no.~1-2, 19--40.

\bibitem{DGW}
H.~Djellout, A.~Guillin, and L.~Wu, \emph{Transportation cost-information
  inequalities and applications to random dynamical systems and diffusions},
  Ann. Probab. \textbf{32} (2004), no.~3B, 2702--2732.

\bibitem{herau07}
F.~H\'erau, \emph{Short and long time behavior of the {F}okker-{P}lanck
  equation in a confining potential and applications}, J. Funct. Anal.
  \textbf{244} (2007), no.~1, 95--118.

\bibitem{herau-nier04}
F.~H\'erau and F.~Nier, \emph{Isotropic hypoellipticity and trend to the
  equilibrium for the {F}okker-{P}lanck equation with high degree potential},
  Arch. Rat. Mech. Anal. \textbf{2} (2004), no.~171, 151--218.

\bibitem{ledoux01}
M.~Ledoux, \emph{The concentration of measure phenomenon}, Math. Surveys and
  Monographs 89, Amer. Math. Society, Providence, 2001.

\bibitem{malrieu01}
F.~Malrieu, \emph{Logarithmic {S}obolev inequalities for some nonlinear
  {PDE}'s}, Stochastic Process. Appl. \textbf{95} (2001), no.~1, 109--132.

\bibitem{malrieu03}
\bysame, \emph{Convergence to equilibrium for granular media equations and
  their {E}uler schemes}, Ann. Appl. Probab. \textbf{13} (2003), no.~2,
  540--560.

\bibitem{meleard}
S.~Mel{\'e}ard, \emph{Asymptotic behaviour of some interacting particle
  systems: {Mc Kean-Vlasov} and {B}oltzmannn models}, Probabilistic models for
  non linear partial differential equations. Lecture Notes in Math. 1627
  (1995), 42--95.

\bibitem{ov}
F.~Otto and C.~Villani, \emph{Generalization of an inequality by {T}alagrand,
  and links with the logarithmic {S}obolev inequality}, J. Funct. Anal.
  \textbf{173} (2000), no.~2, 361--400.

\bibitem{EGM}
Esposito R., Guo Y., and Marra R., \emph{Stability of the front under a
  vlasov-fokker-planck dynamics}, Arch. Rat. Mech. Anal., to appear (2009).

\bibitem{talay}
D.~Talay, \emph{Stochastic {H}amiltonian dissipative systems: exponential
  convergence to the invariant measure, and discretization by the implicit
  {E}uler scheme}, Mark. Proc. Rel. Fields \textbf{8} (2002), no.~2, 163--198.

\bibitem{villani-hypo}
C.~Villani, \emph{Hypocoercivity}, Memoirs of the Amer. Math. Society, to
  appear, 2009.

\bibitem{villani-stflour}
\bysame, \emph{Optimal transport, old and new}, Grund. der Math.
  Wissenschaften, vol. 338, Springer-Verlag, Berlin, 2009.

\bibitem{wudamp}
L.~Wu, \emph{Large and moderate deviations and exponential convergence for
  stochastic damping {H}amiltonian systems}, Stoch. Proc. Appl. \textbf{91}
  (2001), 205--238.

\end{thebibliography}

\bibliographystyle{amsplain}

\bigskip

{\footnotesize %
 \noindent Fran\c cois \textsc{Bolley}, %
 \noindent\url{mailto:bolley(AT)ceremade.dauphine(DOT)fr}

 \medskip
 \noindent\textsc{Ceremade, UMR CNRS 7534 \\
   Universit\'e Paris-Dauphine, Place du Mar\'echal De Lattre De
   Tassigny, F-75775 Paris \textsc{Cedex} 16}

\bigskip

\noindent Arnaud \textsc{Guillin}, corresponding author, %
\noindent\url{mailto:guillin(AT)math.univ-bpclermont(DOT)fr}

\medskip

\noindent\textsc{UMR CNRS 6620, Laboratoire de Math\'ematiques\\
  Universite Blaise Pascal, avenue des Landais,  F-63177 Aubiere
  \textsc{Cedex}}

\bigskip

 \noindent Florent \textsc{Malrieu},  %
 \url{mailto:florent.malrieu(AT)univ-rennes1(DOT)fr}

 \medskip

 \noindent\textsc{UMR CNRS 6625, Institut de Recherche Math\'ematique
   de Rennes (IRMAR) ; \\ Universit\'e de Rennes I, Campus de Beaulieu,
   F-35042 Rennes \textsc{Cedex}}

\vfill

\begin{flushright}\texttt{Compiled \today.}\end{flushright}

}%footnotesize

\end{document}